\documentclass[11pt,a4paper]{amsart}
\usepackage{mathrsfs}
\usepackage{syntonly}
\usepackage{amsmath}
\usepackage{amsthm}
\usepackage{amsfonts}
\usepackage{amssymb}
\usepackage{latexsym}
\usepackage{amscd,amssymb,amsopn,amsmath,amsthm,graphics,amsfonts,mathrsfs,accents,enumerate,verbatim,calc}
\usepackage[dvips]{graphicx}
\usepackage[colorlinks=true,linkcolor=red,citecolor=blue]{hyperref}
\usepackage[all]{xy}
\usepackage{enumitem}

\date{}
\pagestyle{plain}
\textheight= 22.8 true cm \textwidth =16 true cm

\allowdisplaybreaks[4] \footskip=15pt
\renewcommand{\uppercasenonmath}[1]{}

\topmargin=6pt \evensidemargin0pt \oddsidemargin0pt
\numberwithin{equation}{section} \theoremstyle{plain}
\newtheorem{lem}{Lemma}[section]
\newtheorem{cor}[lem]{Corollary}
\newtheorem{prop}[lem]{Proposition}
\newtheorem{thm}[lem]{Theorem}

\newtheorem{definition}[lem]{Definition}
\newtheorem{Ex}[lem]{Example}
\newtheorem{Quest}[lem]{Question}
\newtheorem{Property}[lem]{Property}
\newtheorem{Properties}[lem]{Properties}
\newtheorem{Subprops}{}[lem]
\newtheorem{Para}[lem]{}

\newtheorem{rem}[lem]{Remark}

\newtheorem*{ack*}{ACKNOWLEDGEMENTS}




\newcommand{\pf}{\noindent\begin {proof}}
\newcommand{\epf}{\end{proof}}

\newcommand{\IE}{\mathcal{\mathcal{I}(\mathcal{E})}}

\pagestyle{myheadings}
\markboth{\rightline {\scriptsize   J. Hu, D. Zhang and P. Zhou}}
         {\leftline{\scriptsize Gorenstein homological dimensions for extriangulated categories}}

\begin{document}
\begin{center}
{\Large  \bf  Gorenstein homological dimensions for extriangulated categories}

\vspace{0.5cm}  Jiangsheng Hu$^{a}$, Dongdong Zhang$^{b}$\footnote{Corresponding author. Jiangsheng Hu was supported by the NSF of China (Grants No. 11501257, 11671069, 11771212) and Qing Lan Project of Jiangsu Province. Panyue Zhou was supported by the Hunan Provincial Natural Science Foundation of China (Grant
No. 2018JJ3205) and the NSF of China (Grants No. 11671221)} and Panyue Zhou$^{c}$ \\
\medskip

\hspace{-4mm}$^{a}$School of Mathematics and Physics, Jiangsu University of Technology,
 Changzhou 213001, China\\
 $^b$Department of Mathematics, Zhejiang Normal University,
\small Jinhua 321004, China\\
$^c$College of Mathematics, Hunan Institute of Science and Technology, Yueyang, Hunan 414006, China\\
E-mails: jiangshenghu@jsut.edu.cn, zdd@zjnu.cn and panyuezhou@163.com \\
\end{center}

\bigskip
\centerline { \bf  Abstract}
\medskip

\leftskip10truemm \rightskip10truemm \noindent
\hspace{1em} Let $(\mathcal{C},\mathbb{E},\mathfrak{s})$ be an extriangulated category with a proper class $\xi$ of
$\mathbb{E}$-triangles. The authors introduced and studied
 $\xi$-$\mathcal{G}$projective and  $\xi$-$\mathcal{G}$injective in \cite{HZZ}.
 In this paper, we  discuss Gorenstein homological
dimensions for extriangulated categories. More precisely, we first give some characterizations of $\xi$-$\mathcal{G}$projective dimension by using derived functors on $\mathcal{C}$. Second, let $\mathcal{P}(\xi)$ (resp. $\mathcal{I}(\xi)$) be a generating (resp. cogenerating) subcategory of $\mathcal{C}$. We show that the following equality holds under some assumptions:
$$\sup\{\xi\textrm{-}\mathcal{G}{\rm pd}M \ | \ \textrm{for} \ \textrm{any} \ M\in{\mathcal{C}}\}=\sup\{\xi\textrm{-}\mathcal{G}{\rm id}M \ | \ \textrm{for} \ \textrm{any} \ M\in{\mathcal{C}}\},$$
where $\xi\textrm{-}\mathcal{G}{\rm pd}M$ (resp. $\xi\textrm{-}\mathcal{G}{\rm id}M$) denotes $\xi$-$\mathcal{G}$projective (resp. $\xi$-$\mathcal{G}$injective) dimension of $M$.
As an application, our main results generalize their work by Bennis-Mahdou and Ren-Liu.
Moreover, our proof is not far from the usual module or triangulated case.\\[2mm]
{\bf Keywords:} Extriangulated category; Proper class of $\mathbb{E}$-triangles; Gorenstein homological dimensions.\\
{\bf 2010 Mathematics Subject Classification:} 18E30; 18E10; 18G25; 55N20.

\leftskip0truemm \rightskip0truemm
\section { \bf Introduction}
The notion of extriangulated categories was introduced by Nakaoka and Palu in \cite{NP} as a simultaneous generalization of
exact categories and triangulated categories. Exact categories and extension closed subcategories of an
extriangulated category are extriangulated categories, while there exist some other examples of extriangulated categories which are neither exact nor triangulated, see \cite{NP,ZZ,HZZ}. Hence many results hold on exact categories
and triangulated categories can be unified in the same framework.

Let $(\mathcal{C},\mathbb{E},\mathfrak{s})$  be an extriangulated category.  The authors \cite{HZZ} studied a relative homological algebra in $\mathcal C$ which parallels the relative homological algebra in a triangulated category. By specifying a class of $\mathbb{E}$-triangles, which is called a proper class $\xi$ of
$\mathbb{E}$-triangles, we introduced $\xi$-$\mathcal{G}$projective dimensions and  $\xi$-$\mathcal{G}$injective dimensions,
and discussed their properties.

Bennis and Mahdou \cite{BM} proved that the global Gorenstein projective dimension of a ring $R$
is equal to  the global Gorenstein injective dimension of $R$.
Ren and Liu \cite{RL1} studied Gorenstein homological dimensions for triangulated categories, let $\mathcal C$ be a triangulated
category with enough $\xi$-projectives and enough $\xi$-injectives for a fixed proper class of
triangles $\xi$. They showed the following. Assume that the full subcategory $\mathcal P(\xi)$ of
$\xi$-projective objects is a generating subcategory of $\mathcal C$ and the full subcategory $\mathcal I(\xi)$ of
$\xi$-injective objects is a congenerating subcategory of $\mathcal C$, there exists an equality
$${\rm sup}\{\xi\emph{-}{\mathcal G}{\rm pd} M~|~\textrm{for any}~ M\in \mathcal C\}={\rm sup}\{\xi\emph{-}{\mathcal G}{\rm id}M~|~\textrm{for any}~ M\in \mathcal C\},$$
where $\xi$-${\mathcal G}{\rm pd} M$ (resp. $\xi$-${\mathcal G}{\rm id}M$) denotes $\xi$-${\mathcal G}$projective (resp. $\xi$-${\mathcal G}$injective) dimension of $M$.
Motivated by this idea,
in this paper, we study Gorenstein homological
dimensions for extriangulated categories. More precisely, we give some characterizations of $\xi$-$\mathcal{G}$projective dimension by using derived functors on $\mathcal{C}$, see Theorem \ref{prop:2.7}.
In addition, let $\mathcal{P}(\xi)$ (resp. $\mathcal{I}(\xi)$) be a generating (resp. cogenerating) subcategory of $\mathcal{C}$. We show that the following statements are equivalent for some non-negative integer $m$ under some assumptions:
\begin{enumerate}
\item $\sup\{\xi\textrm{-}\mathcal{G}{\rm pd}M \ | \ \textrm{for} \ \textrm{any} \ M\in{\mathcal{C}}\}\leq m$;

\item $\sup\{\xi\textrm{-}\mathcal{G}{\rm id}M \ | \ \textrm{for} \ \textrm{any} \ M\in{\mathcal{C}}\}\leq m$;

\item $\xi\textrm{-}{\rm silp}\mathcal{C}=\xi\textrm{-}{\rm spli}\mathcal{C}\leq m$.
\end{enumerate}
where $\xi\textrm{-}\mathcal{G}{\rm pd}M$ (resp. $\xi\textrm{-}\mathcal{G}{\rm id}M$) denotes $\xi$-$\mathcal{G}$projective (resp. $\xi$-$\mathcal{G}$injective) dimension of $M$ and
$\xi\textrm{-}{\rm silp}\mathcal{C}=\sup\{\xi\textrm{-}{\rm id}P~|~P\in{\mathcal{P}(\xi)}\}$,
$\xi\textrm{-}{\rm spli}\mathcal{C}=\sup\{\xi\textrm{-}{\rm pd}I~|~I\in{\mathcal{I}(\xi)}\}$, see Theorem \ref{thm:3.4}. As a consequence, we have the following equality on $\mathcal{C}$:
$$\sup\{\xi\textrm{-}\mathcal{G}{\rm pd}M \ | \ \textrm{for} \ \textrm{any} \ M\in{\mathcal{C}}\}=\sup\{\xi\textrm{-}\mathcal{G}{\rm id}M \ | \ \textrm{for} \ \textrm{any} \ M\in{\mathcal{C}}\}.$$
Note that module categories and triangulated categories can be viewed as extriangulated categories. As an application, our main results generalize their work by Bennis-Mahdou and Ren-Liu.

\section{\bf Preliminaries}
Let us briefly recall some definitions and basic properties of extriangulated categories from \cite{NP}.
We omit some details here, but the reader can find
them in \cite{NP}.

Let $\mathcal{C}$ be an additive category equipped with an additive bifunctor
$$\mathbb{E}: \mathcal{C}^{\rm op}\times \mathcal{C}\rightarrow {\rm Ab},$$
where ${\rm Ab}$ is the category of abelian groups. For any objects $A, C\in\mathcal{C}$, an element $\delta\in \mathbb{E}(C,A)$ is called an $\mathbb{E}$-extension.
Let $\mathfrak{s}$ be a correspondence which associates an equivalence class $$\mathfrak{s}(\delta)=\xymatrix@C=0.8cm{[A\ar[r]^x
 &B\ar[r]^y&C]}$$ to any $\mathbb{E}$-extension $\delta\in\mathbb{E}(C, A)$. This $\mathfrak{s}$ is called a {\it realization} of $\mathbb{E}$, if it makes the diagrams in \cite[Definition 2.9]{NP} commutative.
 A triplet $(\mathcal{C}, \mathbb{E}, \mathfrak{s})$ is called an {\it extriangulated category} if it satisfies the following conditions.
\begin{enumerate}
\item $\mathbb{E}\colon\mathcal{C}^{\rm op}\times \mathcal{C}\rightarrow \rm{Ab}$ is an additive bifunctor.

\item $\mathfrak{s}$ is an additive realization of $\mathbb{E}$.

\item $\mathbb{E}$ and $\mathfrak{s}$  satisfy the compatibility conditions in \cite[Definition 2.12]{NP}.

 \end{enumerate}

\begin{rem}
Note that both exact categories and triangulated categories are extriangulated categories, see \cite[Example 2.13]{NP} and extension closed subcategories of extriangulated categories are
again extriangulated, see \cite[Remark 2.18]{NP}. Moreover, there exist extriangulated categories which
are neither exact categories nor triangulated categories, see \cite[Proposition 3.30]{NP}, \cite[Example 4.14]{ZZ} and \cite[Remark 3.3]{HZZ}.
\end{rem}

A class of $\mathbb{E}$-triangles $\xi$ is {\it closed under base change} if for any $\mathbb{E}$-triangle $$\xymatrix@C=2em{A\ar[r]^x&B\ar[r]^y&C\ar@{-->}[r]^{\delta}&\in\xi}$$ and any morphism $c\colon C' \to C$, then any $\mathbb{E}$-triangle  $\xymatrix@C=2em{A\ar[r]^{x'}&B'\ar[r]^{y'}&C'\ar@{-->}[r]^{c^*\delta}&}$ belongs to $\xi$.

Dually, a class of  $\mathbb{E}$-triangles $\xi$ is {\it closed under cobase change} if for any $\mathbb{E}$-triangle $$\xymatrix@C=2em{A\ar[r]^x&B\ar[r]^y&C\ar@{-->}[r]^{\delta}&\in\xi}$$ and any morphism $a\colon A \to A'$, then any $\mathbb{E}$-triangle  $\xymatrix@C=2em{A'\ar[r]^{x'}&B'\ar[r]^{y'}&C\ar@{-->}[r]^{a_*\delta}&}$ belongs to $\xi$.

\begin{lem}\label{lem1} \emph{(see \cite[Proposition 3.15]{NP})} Assume that $(\mathcal{C}, \mathbb{E},\mathfrak{s})$ is an extriangulated category. Let $C$ be any object, and let $\xymatrix@C=2em{A_1\ar[r]^{x_1}&B_1\ar[r]^{y_1}&C\ar@{-->}[r]^{\delta_1}&}$ and $\xymatrix@C=2em{A_2\ar[r]^{x_2}&B_2\ar[r]^{y_2}&C\ar@{-->}[r]^{\delta_2}&}$ be any pair of $\mathbb{E}$-triangles. Then there is a commutative diagram
in $\mathcal{C}$
$$\xymatrix{
    & A_2\ar[d]_{m_2} \ar@{=}[r] & A_2 \ar[d]^{x_2} \\
  A_1 \ar@{=}[d] \ar[r]^{m_1} & M \ar[d]_{e_2} \ar[r]^{e_1} & B_2\ar[d]^{y_2} \\
  A_1 \ar[r]^{x_1} & B_1\ar[r]^{y_1} & C   }
  $$
  which satisfies $\mathfrak{s}(y^*_2\delta_1)=\xymatrix@C=2em{[A_1\ar[r]^{m_1}&M\ar[r]^{e_1}&B_2]}$ and
  $\mathfrak{s}(y^*_1\delta_2)=\xymatrix@C=2em{[A_2\ar[r]^{m_2}&M\ar[r]^{e_2}&B_1]}$.

\end{lem}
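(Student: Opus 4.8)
Since this is \cite[Proposition 3.15]{NP}, the most economical route is simply to quote it; I only indicate how one would recover it from the axioms of an extriangulated category. The exact-category picture is my guide: there $M$ is the fibre product $B_1\times_C B_2$ and the whole array is the canonical diagram attached to it, so my task is to build a homotopy-theoretic replacement for this fibre product and then read the two new short sequences off it.

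I would begin by taking the base change of $\delta_1$ along $y_2\colon B_2\to C$. By the compatibility of the realization $\mathfrak{s}$ with base change, the $\mathbb{E}$-extension $y_2^{*}\delta_1\in\mathbb{E}(B_2,A_1)$ is realized by an $\mathbb{E}$-triangle $A_1\xrightarrow{m_1}M\xrightarrow{e_1}B_2$ together with a morphism of $\mathbb{E}$-triangles onto $\delta_1$ sitting over $(\mathrm{id}_{A_1},y_2)$; writing $e_2\colon M\to B_1$ for its middle component, I immediately obtain $e_2m_1=x_1$ and $y_1e_2=y_2e_1$. This already fills in the identity first column, the middle row, and the commutative lower-right square.

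Next I would construct $m_2\colon A_2\to M$. Because $y_2x_2=0$ in the $\mathbb{E}$-triangle $\delta_2$, functoriality of $\mathbb{E}$ gives $x_2^{*}(y_2^{*}\delta_1)=(y_2x_2)^{*}\delta_1=0$, so by the standard criterion for factoring a morphism through a deflation \cite{NP} the map $x_2$ lifts to some $m_2\colon A_2\to M$ with $e_1m_2=x_2$. Applying the long exact sequence attached to $\delta_1$, the composite $e_2m_2$ is annihilated by $y_1$ and hence factors as $x_1g$ for some $g\colon A_2\to A_1$; replacing $m_2$ by $m_2-m_1g$ leaves $e_1m_2=x_2$ untouched (since $e_1m_1=0$) while forcing $e_2m_2=0$. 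This produces the top row $A_2=A_2$, the remaining vertical maps $m_2$ and $x_2$, and the complex $A_2\xrightarrow{m_2}M\xrightarrow{e_2}B_1$.

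The crux is then to show that $A_2\xrightarrow{m_2}M\xrightarrow{e_2}B_1$ is an $\mathbb{E}$-triangle realizing $y_1^{*}\delta_2$. Here I would appeal to the octahedral axiom and its dual: these guarantee that the lower-right square built above is homotopy cartesian, and that the base change of $\delta_2$ along $y_1$ yields a second homotopy cartesian square over the same cospan $B_1\xrightarrow{y_1}C\xleftarrow{y_2}B_2$. Any two such squares are canonically isomorphic compatibly with the inflations from $A_1$ and $A_2$, and transporting $y_1^{*}\delta_2$ across this isomorphism identifies the middle column as claimed. I expect this last step to be the real obstacle, precisely because it is where one leaves the formal manipulation of the long exact sequences in $\mathbb{E}$ and $\mathrm{Hom}$ and must invoke the full octahedral strength of the extriangulated structure; the construction of the maps and the verification of the commutativities in the earlier paragraphs are then routine.
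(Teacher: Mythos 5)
The paper itself gives no proof of this lemma: it is quoted verbatim from \cite[Proposition 3.15]{NP}, so your instinct that ``the most economical route is simply to quote it'' is exactly what the authors do, and that is the safe route. Evaluated as a proof, however, your sketch has a genuine gap, located exactly where you suspect, and it is more serious than an omitted verification: the object your construction outputs can actually falsify the conclusion. The content of the lemma is not the commutativity of the diagram but the two realization statements, and an $m_2$ produced by lifting $x_2$ through $e_1$ and correcting by $m_1g$ need not satisfy $\mathfrak{s}(y_1^*\delta_2)=[A_2\xrightarrow{m_2}M\xrightarrow{e_2}B_1]$. The constraints $e_1m_2=x_2$ and $e_2m_2=0$ do not pin $m_2$ down, because inflations in an extriangulated category need not be monomorphisms. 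Concretely, let $\mathcal{C}$ be triangulated with $C\neq 0$, and take both $\mathbb{E}$-triangles to be the rotated triangle $\Sigma^{-1}C\to 0\to C$, so $A_1=A_2=\Sigma^{-1}C$ and $B_1=B_2=0$. Then $y_2^*\delta_1=0$, so one may take $M=\Sigma^{-1}C$, $m_1=\mathrm{id}$, $e_1=e_2=0$; every morphism $m_2\colon A_2\to M$ satisfies your two constraints, and your lifting-plus-correction procedure can perfectly well output $m_2=0$. But $\mathfrak{s}(y_1^*\delta_2)=\mathfrak{s}(0)$ is the class of the split $\mathbb{E}$-triangle $A_2\xrightarrow{\mathrm{id}}A_2\to 0$, and $[A_2\xrightarrow{0}M\xrightarrow{0}0]$ is not equivalent to it: an equivalence would be an isomorphism $b$ with $b\circ\mathrm{id}_{A_2}=0$. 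So the commutative diagram is cheap, the realization claim is the whole battle, and it is not a property of an arbitrarily chosen $m_2$.

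Nor can the last step be patched by the appeal you make. ``Any two homotopy cartesian squares over the same cospan are canonically isomorphic compatibly with the inflations'' is not a usable fact here: even in triangulated categories homotopy pullbacks are unique only up to \emph{non-canonical} isomorphism, and an isomorphism compatible with the maps from $A_1$ and $A_2$ is precisely what has to be constructed; moreover, in an extriangulated category there is no prior theory of homotopy cartesian squares to invoke --- such a theory is built out of this very proposition and its corollaries, so the appeal is circular. A workable argument must produce the second column \emph{together with its conflation} from the realization axioms: realize $y_1^*\delta_2$ as $[A_2\xrightarrow{m_2'}M'\xrightarrow{e_2'}B_1]$, so that $m_2'$ comes equipped with an $\mathbb{E}$-triangle; observe that the induced map $f\colon M'\to B_2$ satisfies $f^*(y_2^*\delta_1)=(y_1e_2')^*\delta_1=(e_2')^*(y_1^*\delta_1)=0$ because $y_1^*\delta_1=0$, hence $f$ lifts to a comparison morphism $M'\to M$; and then show that this morphism is an isomorphism compatible with all the structure maps. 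That is essentially the argument in \cite{NP}; if you only need the statement for this paper, cite it, as the authors do.
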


A class of $\mathbb{E}$-triangles $\xi$ is called {\it saturated} if in the situation of Lemma \ref{lem1}, whenever the third vertical and the second horizontal $\mathbb{E}$-triangles belong to $\xi$, then the  $\mathbb{E}$-triangle $$\xymatrix@C=2em{A_1\ar[r]^{x_1}&B_1\ar[r]^{y_1}&C\ar@{-->}[r]^{\delta_1 }&}$$  belongs to $\xi$.

An $\mathbb{E}$-triangle $\xymatrix@C=2em{A\ar[r]^x&B\ar[r]^y&C\ar@{-->}[r]^{\delta}&}$ is called {\it split} if $\delta=0$. It is easy to see that it is split if and only if $x$ is section or $y$ is retraction. The full subcategory  consisting of the split $\mathbb{E}$-triangles will be denoted by $\Delta_0$.

  \begin{definition} \cite[Definition 3.1]{HZZ}\label{def:proper class}{\rm  Let $\xi$ be a class of $\mathbb{E}$-triangles which is closed under isomorphisms. $\xi$ is called a {\it proper class} of $\mathbb{E}$-triangles if the following conditions hold:

  (1) $\xi$ is closed under finite coproducts and $\Delta_0\subseteq \xi$.

  (2) $\xi$ is closed under base change and cobase change.

  (3) $\xi$ is saturated.}
  \end{definition}
 \begin{definition} \cite[Definition 4.1]{HZZ}
 {\rm An object $P\in\mathcal{C}$  is called {\it $\xi$-projective}  if for any $\mathbb{E}$-triangle $$\xymatrix{A\ar[r]^x& B\ar[r]^y& C \ar@{-->}[r]^{\delta}& }$$ in $\xi$, the induced sequence of abelian groups $\xymatrix@C=0.6cm{0\ar[r]& \mathcal{C}(P,A)\ar[r]& \mathcal{C}(P,B)\ar[r]&\mathcal{C}(P,C)\ar[r]& 0}$ is exact. Dually, we have the definition of {\it $\xi$-injective}.}
\end{definition}

We denote $\mathcal{P(\xi)}$ (respectively $\mathcal{I(\xi)}$) the class of $\xi$-projective (respectively $\xi$-injective) objects of $\mathcal{C}$. It follows from the definition that this subcategory $\mathcal{P}(\xi)$ and $\mathcal{I}(\xi)$ are full, additive, closed under isomorphisms and direct summands.

 An extriangulated  category $(\mathcal{C}, \mathbb{E}, \mathfrak{s})$ is said to  have {\it  enough
$\xi$-projectives} \ (respectively {\it  enough $\xi$-injectives}) provided that for each object $A$ there exists an $\mathbb{E}$-triangle $\xymatrix@C=2.1em{K\ar[r]& P\ar[r]&A\ar@{-->}[r]& }$ (respectively $\xymatrix@C=2em{A\ar[r]& I\ar[r]& K\ar@{-->}[r]&}$) in $\xi$ with $P\in\mathcal{P}(\xi)$ (respectively $I\in\mathcal{I}(\xi)$).

The {\it $\xi$-projective dimension} $\xi$-${\rm pd} A$ of $A\in\mathcal{C}$ is defined inductively.
 If $A\in\mathcal{P}(\xi)$, then define $\xi$-${\rm pd} A=0$.
Next if $\xi$-${\rm pd} A>0$, define $\xi$-${\rm pd} A\leqslant n$ if there exists an $\mathbb{E}$-triangle
 $K\to P\to A\dashrightarrow$  in $\xi$ with $P\in \mathcal{P}(\xi)$ and $\xi$-${\rm pd} K\leqslant n-1$.
Finally we define $\xi$-${\rm pd} A=n$ if $\xi$-${\rm pd} A\leqslant n$ and $\xi$-${\rm pd} A\nleq n-1$. Of course we set $\xi$-${\rm pd} A=\infty$, if $\xi$-${\rm pd} A\neq n$ for all $n\geqslant 0$.

Dually we can define the {\it $\xi$-injective dimension}  $\xi$-${\rm id} A$ of an object $A\in\mathcal{C}$.

\begin{definition} \cite[Definition 4.4]{HZZ}
{\rm An {\it $\xi$-exact} complex $\mathbf{X}$ is a diagram $$\xymatrix@C=2em{\cdots\ar[r]&X_1\ar[r]^{d_1}&X_0\ar[r]^{d_0}&X_{-1}\ar[r]&\cdots}$$ in $\mathcal{C}$ such that for each integer $n$, there exists an $\mathbb{E}$-triangle $\xymatrix@C=2em{K_{n+1}\ar[r]^{g_n}&X_n\ar[r]^{f_n}&K_n\ar@{-->}[r]^{\delta_n}&}$ in $\xi$ and $d_n=g_{n-1}f_n$.
}\end{definition}

\begin{definition} \cite[Definition 4.5]{HZZ}
{\rm Let $\mathcal{W}$ be a class of objects in $\mathcal{C}$. An $\mathbb{E}$-triangle
$$\xymatrix@C=2em{A\ar[r]& B\ar[r]& C\ar@{-->}[r]& }$$ in $\xi$ is called to be
{\it $\mathcal{C}(-,\mathcal{W})$-exact} (respectively
{\it $\mathcal{C}(\mathcal{W},-)$-exact}) if for any $W\in\mathcal{W}$, the induced sequence of abelian group $\xymatrix@C=2em{0\ar[r]&\mathcal{C}(C,W)\ar[r]&\mathcal{C}(B,W)\ar[r]&\mathcal{C}(A,W)\ar[r]& 0}$ (respectively \\ $\xymatrix@C=2em{0\ar[r]&\mathcal{C}(W,A)\ar[r]&\mathcal{C}(W,B)\ar[r]&\mathcal{C}(W,C)\ar[r]&0}$) is exact in ${\rm Ab}$}.
\end{definition}

\begin{definition}\cite[Definition 4.6]{HZZ}
 {\rm Let $\mathcal{W}$ be a class of objects in $\mathcal{C}$. A complex $\mathbf{X}$ is called {\it $\mathcal{C}(-,\mathcal{W})$-exact} (respectively
{\it $\mathcal{C}(\mathcal{W},-)$-exact}) if it is an $\xi$-exact complex
$$\xymatrix@C=2em{\cdots\ar[r]&X_1\ar[r]^{d_1}&X_0\ar[r]^{d_0}&X_{-1}\ar[r]&\cdots}$$ in $\mathcal{C}$ such that  there is  $\mathcal{C}(-,\mathcal{W})$-exact (respectively
 $\mathcal{C}(\mathcal{W},-)$-exact) $\mathbb{E}$-triangle $$\xymatrix@C=2em{K_{n+1}\ar[r]^{g_n}&X_n\ar[r]^{f_n}&K_n\ar@{-->}[r]^{\delta_n}&}$$ in $\xi$ for each integer $n$ and $d_n=g_{n-1}f_n$.

 An $\xi$-exact complex $\mathbf{X}$ is called {\it complete $\mathcal{P}(\xi)$-exact} (respectively {\it complete $\mathcal{I}(\xi)$-exact}) if it is $\mathcal{C}(-,\mathcal{P}(\xi))$-exact (respectively
 $\mathcal{C}(\mathcal{I}(\xi),-)$-exact).}
\end{definition}

\begin{definition}\cite[Definition 4.7]{HZZ}
 {\rm A  {\it complete $\xi$-projective resolution}  is a complete $\mathcal{P}(\xi)$-exact complex\\ $$\xymatrix@C=2em{\mathbf{P}:\cdots\ar[r]&P_1\ar[r]^{d_1}&P_0\ar[r]^{d_0}&P_{-1}\ar[r]&\cdots}$$ in $\mathcal{C}$ such that $P_n$ is $\xi$-projective for each integer $n$. Dually,   a  {\it complete $\xi$-injective coresolution}  is a complete $\mathcal{I}(\xi)$-exact complex $$\xymatrix@C=2em{\mathbf{I}:\cdots\ar[r]&I_1\ar[r]^{d_1}&I_0\ar[r]^{d_0}&I_{-1}\ar[r]&\cdots}$$ in $\mathcal{C}$ such that $I_n$ is $\xi$-injective for each integer $n$.}
\end{definition}

\begin{definition} \cite[Definition 4.8]{HZZ}
{\rm  Let $\mathbf{P}$ be a complete $\xi$-projective resolution in $\mathcal{C}$. So for each integer $n$, there exists a $\mathcal{C}(-, \mathcal{P}(\xi))$-exact $\mathbb{E}$-triangle $\xymatrix@C=2em{K_{n+1}\ar[r]^{g_n}&P_n\ar[r]^{f_n}&K_n\ar@{-->}[r]^{\delta_n}&}$ in $\xi$. The objects $K_n$ are called {\it $\xi$-$\mathcal{G}$projective} for each integer $n$. Dually if  $\mathbf{I}$ is a complete $\xi$-injective  coresolution in $\mathcal{C}$, there exists a  $\mathcal{C}(\mathcal{I}(\xi), -)$-exact $\mathbb{E}$-triangle $\xymatrix@C=2em{K_{n+1}\ar[r]^{g_n}&I_n\ar[r]^{f_n}&K_n\ar@{-->}[r]^{\delta_n}&}$ in $\xi$ for each integer $n$. The objects $K_n$ are called {\it $\xi$-$\mathcal{G}$injective} for each integer $n$.}
\end{definition}

Similar to the way of defining $\mathcal{E}$-projective and $\mathcal{E}$-injective dimensions, for an object $A\in{\mathcal{C}}$, the $\mathcal{E}$-Gprojective dimension $\mathcal{E}$-${\rm \mathcal{G}pd} A$ and $\mathcal{E}$-Ginjective dimension $\mathcal{E}$-${\rm \mathcal{G}id} A$ are defined inductively in \cite{HZZ}.

Throughout, the full subcategory of $\xi$-projective (respectively $\xi$-injective) objects is denoted by $\mathcal{P(\xi)}$ (respectively $\mathcal{I(\xi)}$). We denote by $\mathcal{GP}(\xi)$ (respectively $\mathcal{GI}(\xi)$) the class of $\xi$-$\mathcal{G}$projective (respectively $\xi$-$\mathcal{G}$injective) objects.
It is obvious that $\mathcal{P(\xi)}$ $\subseteq$ $\mathcal{GP}(\xi)$ and $\mathcal{I(\xi)}$ $\subseteq$ $\mathcal{GI}(\xi)$.

\section{\bf Derived functors and Gorenstein homological dimensions for extriangulated categories}
 Throughout this section, we always assume that  $\mathcal{C}=(\mathcal{C}, \mathbb{E}, \mathfrak{s})$ is an extriangulated category with enough $\xi$-projectives and enough $\xi$-injectives satisfying weak idempotent completeness (see \cite[Condition 5.8]{NP}) and $\xi$ is a proper class of $\mathbb{E}$-triangles in  $(\mathcal{C}, \mathbb{E}, \mathfrak{s})$.

\begin{definition}\label{df:resolution} Let $A$ be an object in $\mathcal{C}$. An {\it $\xi$-projective} resolution of $A$ is an $\xi$-exact complex $\mathbf{P}\rightarrow A$ such that $P_n\in{\mathcal{P}(\xi)}$ for all $n\geq0$. Dually, an {\it $\xi$-injective} coresolution of $A$ is an $\xi$-exact complex $ A\rightarrow \mathbf{I}$ such that $I^n\in{\mathcal{I}(\xi)}$ for all $n\geq0$.
\end{definition}

Using standard arguments
from relative homological algebra, one can prove a version of the comparison
theorem for $\xi$-projective resolutions (resp. $\xi$-injective coresolutions). It follows that any two  $\xi$-projective resolutions (resp. $\xi$-injective coresolutions) of an object $A$ are homotopy equivalent. So
we have the following definition.

\begin{definition} \label{df:derived-functors} Let $A$ and $B$ be objects in $\mathcal{C}$.

\begin{enumerate}
\item[{\rm (1)}] If we choose an $\xi$-projective resolution $\xymatrix{\mathbf{P}\ar[r]& A}$ of  $A$, then for any integer $n\geqslant 0$, the \emph{$\xi$-cohomology groups} $\xi{\rm xt}_{\mathcal{P}(\xi)}^n(A,B)$ are defined as
$$\xi{\rm xt}_{\mathcal{P}(\xi)}^n(A,B)=H^n({\mathcal{C}}(\mathbf{P},B)).$$

\item[{\rm (2)}] If we choose an
$\xi$-injective coresolution $\xymatrix{B\ar[r]&\mathbf{I}}$ of  $B$, then for any integer $n\geqslant 0$, the \emph{$\xi$-cohomology groups} $\xi{\rm xt}_{\IE}^n(A,B)$ are defined as $$\xi{\rm xt}_{\IE}^n(A,B)=H^n({\mathcal{C}}(A, \mathbf{I})).$$
\end{enumerate}
\end{definition}
In fact, with the modifications of the usual proof, one obtains the isomorphism $\xi{\rm xt}_{\mathcal{P}(\xi)}^n(A,B)\cong \xi{\rm xt}_{\IE}^n(A,B),$
which is denoted by $\xi{\rm xt}_{\xi}^n(A,B).$

It is easy to see that the following lemma holds by \cite[Lemma 4.14]{HZZ}.

\begin{lem}\label{lem:horseshoe lemma}{\rm (Horseshoe Lemma)} If  $\xymatrix{A\ar[r]^x&B\ar[r]^{y}&C\ar@{-->}[r]^{\delta}&}$ is an $\mathbb{E}$-triangle  in $\xi$, then we have the following commutative diagram
$$\xymatrix{\mathbf{P}_{A}\ar[r]\ar[d]&\mathbf{P}_{B}\ar[r]\ar[d] &\mathbf{P}_{C}\ar[d]\\
A\ar[r]&B\ar[r]&C,}$$
where $\xymatrix{\mathbf{P}_{A}\ar[r]& A,}$ $\xymatrix{\mathbf{P}_{B}\ar[r]& B}$ and $\xymatrix{\mathbf{P}_{C}\ar[r]& C}$ are $\xi$-projective resolutions for $A$, $B$ and $C$ respectively.
\end{lem}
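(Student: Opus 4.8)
The plan is to build $\mathbf{P}_B$ degreewise as the direct sum of the chosen resolutions of $A$ and $C$, and to prove the assertion by induction on the homological degree, the single inductive step being the one-layer horseshoe construction recorded in \cite[Lemma 4.14]{HZZ}.

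First I would fix $\xi$-projective resolutions $\mathbf{P}_A\to A$ and $\mathbf{P}_C\to C$, which exist since $\mathcal{C}$ has enough $\xi$-projectives, and record their degree-$0$ layers as $\mathbb{E}$-triangles $K_1^A\to (P_A)_0\xrightarrow{\alpha}A\dashrightarrow$ and $K_1^C\to (P_C)_0\xrightarrow{\gamma}C\dashrightarrow$ in $\xi$. Since $(P_C)_0$ is $\xi$-projective and $A\xrightarrow{x}B\xrightarrow{y}C$ lies in $\xi$, the map $\mathcal{C}((P_C)_0,B)\to\mathcal{C}((P_C)_0,C)$ is surjective, so $\gamma$ lifts to a morphism $\ell\colon (P_C)_0\to B$ with $y\ell=\gamma$. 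Taking the morphism $\beta\colon (P_A)_0\oplus (P_C)_0\to B$ with components $x\alpha$ and $\ell$ then gives a candidate degree-$0$ augmentation that sits over $x$ and $y$ through the canonical inclusion $(P_A)_0\hookrightarrow (P_A)_0\oplus (P_C)_0$ and projection $(P_A)_0\oplus (P_C)_0\twoheadrightarrow (P_C)_0$.

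The crux is to promote $\beta$ to an honest $\mathbb{E}$-triangle $K_1^B\to (P_A)_0\oplus (P_C)_0\xrightarrow{\beta}B\dashrightarrow$ in $\xi$ together with a syzygy $\mathbb{E}$-triangle $K_1^A\to K_1^B\to K_1^C\dashrightarrow$ in $\xi$ fitting into a $(3\times3)$ commutative diagram relating the three presentations; this is exactly \cite[Lemma 4.14]{HZZ}, whose verification relies on Lemma \ref{lem1} and on the base change, cobase change, and saturation axioms defining a proper class, together with closure of $\xi$ under finite coproducts. I expect this to be the only genuine obstacle: checking that the middle presentation is again an $\mathbb{E}$-triangle in $\xi$ and that the induced syzygy triangle stays in $\xi$. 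Granting it, I would finish by induction: apply the same construction to the syzygy triangle $K_1^A\to K_1^B\to K_1^C\dashrightarrow$ using the degree-$1$ layers of $\mathbf{P}_A$ and $\mathbf{P}_C$ to obtain $(P_B)_1=(P_A)_1\oplus (P_C)_1$ and $K_2^B$, and repeat. Setting $d_n^B=g_{n-1}^B f_n^B$ assembles the $\xi$-exact complex $\mathbf{P}_B\to B$ with $(P_B)_n=(P_A)_n\oplus (P_C)_n$, while the degreewise inclusions and projections furnish chain maps $\mathbf{P}_A\to\mathbf{P}_B$ and $\mathbf{P}_B\to\mathbf{P}_C$ lying over $x$ and $y$, which is the required commutative diagram.
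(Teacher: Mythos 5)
Your proposal is correct and follows essentially the same route as the paper: the paper's proof is just the observation that the statement follows from \cite[Lemma 4.14]{HZZ}, the one-step horseshoe construction, iterated along the syzygy $\mathbb{E}$-triangles. You have simply written out explicitly the degreewise direct-sum construction and the induction that the paper leaves implicit.
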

Using classical methods in homological algebra, one can see that for any $\mathbb{E}$-triangle in $\xi$, the long exact sequence of ``$\xi{\rm xt}$" functors exists. More precisely, we have the following lemma.

\begin{lem}\label{lem:long-exact-sequence} If  $\xymatrix{A\ar[r]^x&B\ar[r]^{y}&C\ar@{-->}[r]^{\delta}&}$ is an $\mathbb{E}$-triangle  in $\xi$, then for any object $X$ in $\mathcal{C}$, we have the following long exact sequences in ${\rm Ab}$
$$\xymatrix{0\ar[r]&{\xi{\rm xt}}_{\xi}^0(X,A)\ar[r]&{\xi{\rm xt}}_{\xi}^0(X,B)\ar[r]&{\xi{\rm xt}}_{\xi}^0(X,C)\ar[r]&{\xi{\rm xt}}_{\xi}^{1}(X,A)\ar[r]&\cdots}$$
and
$$\xymatrix{0\ar[r]&{\xi{\rm xt}}_{\xi}^0(C,X)\ar[r]&{\xi{\rm xt}}_{\xi}^0(B,X)\ar[r]&{\xi{\rm xt}}_{\xi}^0(A,X)\ar[r]&{\xi{\rm xt}}_{\xi}^{1}(C,X)\ar[r]&\cdots.}$$
\end{lem}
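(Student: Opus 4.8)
The plan is to realise each of the two long exact sequences as the long exact cohomology sequence attached to a short exact sequence of (co)chain complexes of abelian groups, exactly as in the classical module-theoretic computation of $\mathrm{Ext}$. The key observation is that I need not resolve the objects $A,B,C$ of the given $\mathbb{E}$-triangle at all; instead I fix a single resolution of $X$ and apply the appropriate $\mathcal{C}(-,-)$-functor to the $\mathbb{E}$-triangle itself. The defining exactness properties of $\xi$-projective and $\xi$-injective objects then supply the degreewise exactness, which is essentially the only input needed.

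For the first (covariant) sequence, I would first fix an $\xi$-projective resolution $\mathbf{P}_X\to X$, so that by Definition \ref{df:derived-functors} one has $\xi{\rm xt}^n_\xi(X,-)\cong H^n(\mathcal{C}(\mathbf{P}_X,-))$. Applying $\mathcal{C}(\mathbf{P}_X,-)$ termwise to the $\mathbb{E}$-triangle $\xymatrix@C=1.6em{A\ar[r]^x&B\ar[r]^y&C\ar@{-->}[r]^\delta&}$ produces three cochain complexes concentrated in non-negative degrees, together with the candidate sequence
$$0\to\mathcal{C}(\mathbf{P}_X,A)\to\mathcal{C}(\mathbf{P}_X,B)\to\mathcal{C}(\mathbf{P}_X,C)\to 0.$$
Because each $P_{X,n}$ is $\xi$-projective and the $\mathbb{E}$-triangle lies in $\xi$, the definition of $\xi$-projectivity says precisely that each row $0\to\mathcal{C}(P_{X,n},A)\to\mathcal{C}(P_{X,n},B)\to\mathcal{C}(P_{X,n},C)\to 0$ is exact in ${\rm Ab}$; hence the displayed sequence is a short exact sequence of complexes. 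Reading off the induced long exact cohomology sequence degree by degree yields the asserted first long exact sequence.

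For the second (contravariant) sequence I would proceed dually: fix an $\xi$-injective coresolution $X\to\mathbf{I}_X$, so that $\xi{\rm xt}^n_\xi(-,X)\cong H^n(\mathcal{C}(-,\mathbf{I}_X))$ after identifying $\xi{\rm xt}^n_{\mathcal{P}(\xi)}$ with $\xi{\rm xt}^n_{\mathcal{I}(\xi)}$ via the isomorphism recorded just before Lemma \ref{lem:horseshoe lemma}. Applying $\mathcal{C}(-,\mathbf{I}_X)$ to the $\mathbb{E}$-triangle and invoking the defining exactness of each $\xi$-injective $I_{X,n}$ yields the short exact sequence of complexes
$$0\to\mathcal{C}(C,\mathbf{I}_X)\to\mathcal{C}(B,\mathbf{I}_X)\to\mathcal{C}(A,\mathbf{I}_X)\to 0,$$
whose long exact cohomology sequence is exactly the second assertion.

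The argument is almost entirely formal, and the remaining points are bookkeeping rather than genuine obstacles. I must check that applying $\mathcal{C}(\mathbf{P}_X,-)$ (resp.\ $\mathcal{C}(-,\mathbf{I}_X)$) to the morphisms in the $\mathbb{E}$-triangle produces honest morphisms of complexes, so that the connecting homomorphisms are well defined and natural, and that the zero on the left of each display is legitimate since the complexes vanish in negative degrees. The one substantive ingredient---and the place where the extriangulated framework carries the load---is the degreewise exactness, which holds verbatim by the definitions of $\xi$-projective and $\xi$-injective objects together with the hypothesis that the triangle lies in the proper class $\xi$. In particular no appeal to the Horseshoe Lemma is needed, although Lemma \ref{lem:horseshoe lemma} furnishes an alternative route: one builds compatible $\xi$-projective resolutions of $A,B,C$, observes that each degreewise $\mathbb{E}$-triangle of $\xi$-projectives splits, and applies $\mathcal{C}(-,X)$ to the resulting degreewise-split short exact sequence of complexes.
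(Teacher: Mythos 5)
Your proof is correct. The paper itself writes no proof of this lemma, appealing only to ``classical methods in homological algebra,'' but its arrangement of the text --- the Horseshoe Lemma (Lemma \ref{lem:horseshoe lemma}) is stated immediately beforehand precisely to feed into this result --- indicates the intended classical route: from the $\mathbb{E}$-triangle $A\to B\to C\dashrightarrow$ build compatible $\xi$-projective resolutions $\mathbf{P}_A,\mathbf{P}_B,\mathbf{P}_C$ with $\mathbf{P}_B$ degreewise the direct sum, note that the induced degreewise $\mathbb{E}$-triangles of $\xi$-projectives split, so that applying $\mathcal{C}(-,X)$ gives a short exact sequence of complexes $0\to\mathcal{C}(\mathbf{P}_C,X)\to\mathcal{C}(\mathbf{P}_B,X)\to\mathcal{C}(\mathbf{P}_A,X)\to 0$, and take the long exact cohomology sequence. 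Your route is genuinely different and arguably more economical: you never resolve $A$, $B$, $C$ at all, but resolve the fixed object $X$ on the appropriate side, and the degreewise exactness of $\mathcal{C}(\mathbf{P}_X,-)$ (resp.\ $\mathcal{C}(-,\mathbf{I}_X)$) applied to the triangle is literally the definition of $\xi$-projective (resp.\ $\xi$-injective), so no splitting argument and no Horseshoe Lemma is needed. The trade-off is where the load sits: your contravariant sequence is computed from the injective coresolution, so it depends on the balance isomorphism $\xi{\rm xt}^n_{\mathcal{P}(\xi)}(-,-)\cong\xi{\rm xt}^n_{\mathcal{I}(\xi)}(-,-)$ recorded just before Lemma \ref{lem:horseshoe lemma}, including its naturality in the first variable (otherwise the maps in your sequence are a priori computed in the ``wrong'' model of $\xi{\rm xt}_\xi$), and it uses the standing hypothesis that $\mathcal{C}$ has enough $\xi$-injectives; the Horseshoe route stays entirely within projective resolutions and would survive without that hypothesis or without invoking balance. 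Under the paper's standing assumptions both arguments are complete, and your closing remark correctly identifies the Horseshoe argument as the alternative.
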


For any objects $A$ and $B$, there is always a natural map $$\xymatrix@C=1.5em{\delta: {\mathcal{C}}(A,B)\ar[r]& \xi {\rm xt}_{\xi}^0(A,B),}$$
which is an isomorphism if $A\in{\mathcal{P}(\xi)}$ or $B\in{\mathcal{I}(\xi)}$.

\begin{lem}\label{lem:2.4} Assume that $M$ is an object in $\mathcal{C}$ and $G$ is an object in $\mathcal{GP}(\xi)$. If $M\in{\widetilde{\mathcal{P}}(\xi)}$ or $M\in{\widetilde{\mathcal{I}}(\xi)}$, then $\xi{\rm xt}_{\xi}^0(G,M)\cong\mathcal{C}(G,M)$ and $\xi{\rm xt}_{\xi}^i(G,M)=0$ for any $i>0$.
\end{lem}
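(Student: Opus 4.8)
The plan is to reduce everything to a single complete $\xi$-projective resolution of $G$ and to read off $\xi{\rm xt}_{\xi}^{\ast}(G,M)$ from its left half. Since $G\in\mathcal{GP}(\xi)$, I would fix a complete $\xi$-projective resolution $\mathbf{P}:\cdots\to P_1\to P_0\to P_{-1}\to\cdots$ whose defining $\mathbb{E}$-triangles $K_{n+1}\to P_n\to K_n\dashrightarrow$ lie in $\xi$, are $\mathcal{C}(-,\mathcal{P}(\xi))$-exact, and have $G=K_0$. The truncation $\cdots\to P_1\to P_0\to G$ is then an $\xi$-projective resolution of $G$, so $\xi{\rm xt}_{\xi}^{i}(G,M)=H^{i}(\mathcal{C}(\mathbf{P}_{\geq 0},M))$ for all $i$, and the whole question becomes one about the complex of abelian groups $\mathcal{C}(\mathbf{P},M)$.

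The key step I would isolate is the claim that \emph{$\mathcal{C}(\mathbf{P},M)$ is an exact complex} whenever $M\in\widetilde{\mathcal{P}}(\xi)$ or $M\in\widetilde{\mathcal{I}}(\xi)$, proved by induction on $\xi$-${\rm pd}\,M$ (respectively $\xi$-${\rm id}\,M$). There are two clean base cases: if $M\in\mathcal{P}(\xi)$ then $\mathcal{C}(\mathbf{P},M)$ is exact because $\mathbf{P}$ is $\mathcal{C}(-,\mathcal{P}(\xi))$-exact by the very definition of a complete $\xi$-projective resolution; if $M\in\mathcal{I}(\xi)$ then every $\mathbb{E}$-triangle in $\xi$ is $\mathcal{C}(-,M)$-exact by the definition of a $\xi$-injective object, so again $\mathcal{C}(\mathbf{P},M)$ is exact. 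For the inductive step I would pick a dimension-reducing $\mathbb{E}$-triangle in $\xi$: either $K\to P\to M\dashrightarrow$ with $P\in\mathcal{P}(\xi)$ and $\xi$-${\rm pd}\,K$ smaller, or $M\to I\to L\dashrightarrow$ with $I\in\mathcal{I}(\xi)$ and $\xi$-${\rm id}\,L$ smaller. Applying $\mathcal{C}(P_j,-)$ to such a triangle yields a short exact sequence in each degree $j$, precisely because each $P_j$ is $\xi$-projective; assembling these gives a short exact sequence of complexes, e.g. $0\to\mathcal{C}(\mathbf{P},M)\to\mathcal{C}(\mathbf{P},I)\to\mathcal{C}(\mathbf{P},L)\to 0$. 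Its long exact homology sequence, in which the two outer complexes are exact by the base case and the induction hypothesis, forces $\mathcal{C}(\mathbf{P},M)$ to be exact as well.

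Granting exactness of $\mathcal{C}(\mathbf{P},M)$, the vanishing $\xi{\rm xt}_{\xi}^{i}(G,M)=0$ for $i>0$ is immediate, since at each interior term $\mathcal{C}(P_i,M)$ with $i\geq 1$ the truncated complex $\mathcal{C}(\mathbf{P}_{\geq 0},M)$ agrees with $\mathcal{C}(\mathbf{P},M)$. For the degree-zero isomorphism I would analyse the natural map $\delta\colon\mathcal{C}(G,M)\to\xi{\rm xt}_{\xi}^{0}(G,M)$. Its surjectivity is the easy half: it follows from a diagram chase along the dimension-reducing triangle using Lemma \ref{lem:long-exact-sequence} together with the fact that $\delta$ is an isomorphism on the projective and injective terms. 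The main obstacle I anticipate is the \emph{injectivity} of $\delta$. In a general extriangulated category $\mathcal{C}(-,M)$ need not be left exact — deflations are not epimorphisms, as already in the triangulated case — so $\delta$ is not automatically injective, and this is exactly the point where the proof departs from the module setting. To overcome it I would exploit the two-sided nature of $\mathbf{P}$ rather than only its left half: the cosyzygy triangle $G\xrightarrow{g_{-1}}P_{-1}\xrightarrow{f_{-1}}K_{-1}\dashrightarrow$ in $\xi$ satisfies $f_{-1}g_{-1}=0$, while an inflation out of a $\xi$-injective object splits, which supplies the injectivity in the base case. Combining the relation $f_{-1}g_{-1}=0$ with exactness of $\mathcal{C}(\mathbf{P},M)$ at the term $\mathcal{C}(P_{-1},M)$ should let one rewrite any $\phi\in\ker\delta$ as a composite that is forced to vanish, yielding $\ker\delta=0$. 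I expect this promotion of complex-level exactness to genuine left-exactness of $\mathcal{C}(-,M)$ on the relevant triangles to be the only delicate point; once it is in place, $\delta$ is an isomorphism and the lemma follows.
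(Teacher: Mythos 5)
Your key step---exactness of the whole complex $\mathcal{C}(\mathbf{P},M)$, proved by induction on $\xi$-${\rm pd}M$ (resp.\ $\xi$-${\rm id}M$) via short exact sequences of complexes---is correct, and it is a genuinely different packaging from the paper's argument: the paper works with a single syzygy $\mathbb{E}$-triangle $G'\to P\to G$ with $G'\in\mathcal{GP}(\xi)$, $P\in\mathcal{P}(\xi)$, obtains its $\mathcal{C}(-,M)$-exactness from \cite[Lemma 4.10(2)]{HZZ} in the $\widetilde{\mathcal{P}}(\xi)$ case and from Lemma \ref{lem:2.3} in the $\widetilde{\mathcal{I}}(\xi)$ case, and then compares it with the long exact sequence of Lemma \ref{lem:long-exact-sequence} via the snake lemma and dimension shifting. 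From your exactness claim the vanishing $\xi{\rm xt}_{\xi}^{i}(G,M)=0$ for $i\geq 1$ does follow, and so does \emph{surjectivity} of $\delta\colon\mathcal{C}(G,M)\to\xi{\rm xt}_{\xi}^{0}(G,M)$: exactness at $\mathcal{C}(P_0,M)$ writes any cocycle as $\psi d_0=(\psi g_{-1})f_0=\delta(\psi g_{-1})$.

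However, the \emph{injectivity} of $\delta$---which you correctly single out as the delicate point---is not proved, and it cannot be extracted from what you have established. Exactness of $\mathcal{C}(\mathbf{P},M)$ is a statement about the objects $P_n$ alone, whereas $\ker\delta=\ker\bigl(\mathcal{C}(f_0,M)\colon\mathcal{C}(G,M)\to\mathcal{C}(P_0,M)\bigr)$ concerns maps out of $G$, which that complex never sees. Your sketch (``rewrite $\phi\in\ker\delta$ as a composite'') can only start if $\phi$ first extends along the inflation $g_{-1}\colon G\to P_{-1}$: given $\psi$ with $\psi g_{-1}=\phi$, indeed $\psi d_0=\phi f_0=0$, exactness at $\mathcal{C}(P_{-1},M)$ gives $\psi=\chi d_{-1}=\chi g_{-2}f_{-1}$, and then $\phi=\chi g_{-2}(f_{-1}g_{-1})=0$. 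But the existence of such a $\psi$ is precisely the surjectivity of $\mathcal{C}(g_{-1},M)$, i.e.\ part of the $\mathcal{C}(-,M)$-exactness of the cosyzygy $\mathbb{E}$-triangle $G\to P_{-1}\to K_{-1}$, an $\mathbb{E}$-triangle in $\xi$ whose third term is $\xi$-$\mathcal{G}$projective. This holds in your two base cases ($M\in\mathcal{P}(\xi)$ by the $\mathcal{C}(-,\mathcal{P}(\xi))$-exactness of $\mathbf{P}$; $M\in\mathcal{I}(\xi)$ by the very definition of $\xi$-injective---no splitting argument is needed there), but in the inductive step it is exactly the content of \cite[Lemma 4.10(2)]{HZZ}, resp.\ Lemma \ref{lem:2.3}, and those statements are not formal consequences of complex-level exactness: their proofs require the lifting argument through (ET3) and \cite[Corollary 3.5]{NP}. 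In an extriangulated category $\mathcal{C}(-,M)$ applied to a deflation need not be injective and applied to an inflation need not be surjective, so neither half of this can be waved through. In short, your proof is complete for $i\geq 1$ and for surjectivity in degree $0$, but injectivity of $\delta$ still requires the same external lemma the paper invokes (or an independent proof of it), and your proposal does not supply one.
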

\begin{proof} Let $\xymatrix{G'\ar[r]^x&P\ar[r]^{y}&G\ar@{-->}[r]^{\delta}&}$ be an $\mathbb{E}$-triangle  in $\xi$ with $G'\in{\mathcal{GP}(\xi)}$ and $P\in\mathcal{P}(\xi)$. If $M\in{\widetilde{\mathcal{P}}(\xi)}$, then we have the following commutative diagram
$$\xymatrix@C=2em{0\ar[r]&{\mathcal{C}}(G,M)\ar[r]\ar[d]_{\delta_1}&{\mathcal{C}}(P,M)\ar[r]\ar[d]_{\delta_2}^{\cong}&{\mathcal{C}}(G',M)
\ar[d]^{\delta_3}\ar[r]&0&\\
0\ar[r]&{\rm{\xi}xt}_{\xi}^0(G,M)\ar[r]&{\rm {\xi}xt}_{\xi}^0(P,M)\ar[r]&{\rm {\xi}xt}_{\xi}^0(G',M)\ar[r]&\ar[r]{\rm {\xi}xt}_{\xi}^1(G,M)&0.}$$
where the top exact sequence follows from \cite[Lemma 4.10(2)]{HZZ}. It is easy to see that $\delta_1$ is monic, similarly one can prove that $\delta_3$ is monic, hence $\delta_1$ is an epimorphism by snake lemma, so $\delta_1$ is an isomorphism. Similarly one can get that $\delta_3$ is an isomorphism, so $\xi{\rm xt}_{\xi}^1(G,M)=0$.  It is easy to show that $\xi{\rm xt}_{\xi}^i(G,M)=0$ for any $i>0$ by Lemma \ref{lem:long-exact-sequence}.

If $M\in{\widetilde{\mathcal{I}}(\xi)}$, a similar argument yields that $\xi{\rm xt}_{\xi}^0(G,M)\cong\mathcal{C}(G,M)$ and $\xi{\rm xt}_{\xi}^i(G,M)=0$ for any $i>0$.
\end{proof}

\begin{lem}\label{lem:2.6} Let $\xymatrix@C=2em{A\ar[r]^x& B\ar[r]^y& C\ar@{-->}[r]^\rho&}$ be an $\mathbb{E}$-triangle in $\xi$ with $A,B\in\mathcal{GP}(\xi)$, then  $C\in\mathcal{GP}(\xi)$ if and only if $\xi{\rm xt}_{\xi}^1(C,P)=0$ for any $P\in{\mathcal{P}(\xi)}$.
\end{lem}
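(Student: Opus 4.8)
The statement splits into two implications, one of which is essentially free. For the forward implication, suppose $C\in\mathcal{GP}(\xi)$. Every $P\in\mathcal{P}(\xi)$ satisfies $\xi\textrm{-}{\rm pd}\,P=0$, hence $P\in\widetilde{\mathcal{P}}(\xi)$, so I can apply Lemma \ref{lem:2.4} with $G=C$ and $M=P$ to conclude $\xi{\rm xt}_{\xi}^{i}(C,P)=0$ for all $i>0$; in particular $\xi{\rm xt}_{\xi}^{1}(C,P)=0$. So the forward direction is immediate from the already established vanishing result.

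For the converse, assume $\xi{\rm xt}_{\xi}^{1}(C,P)=0$ for every $P\in\mathcal{P}(\xi)$. My plan is to realize $C$ as a direct summand of a $\xi$-$\mathcal{G}$projective object, rather than to build a complete resolution of $C$ by hand. Since $A\in\mathcal{GP}(\xi)$, it occurs as a syzygy of a complete $\xi$-projective resolution, so there is an $\mathbb{E}$-triangle $A\to P_A\to A'$ in $\xi$ with $P_A\in\mathcal{P}(\xi)$ and $A'\in\mathcal{GP}(\xi)$; write $\alpha\colon A\to P_A$ for its first map. The two $\mathbb{E}$-triangles $A\xrightarrow{x}B\to C$ and $A\xrightarrow{\alpha}P_A\to A'$ share the first term $A$, so applying the dual of Lemma \ref{lem1} produces an object $D$ and a commutative diagram whose middle row is an $\mathbb{E}$-triangle $P_A\to D\to C$ and whose middle column is an $\mathbb{E}$-triangle $B\to D\to A'$. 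Both lie in $\xi$: the row is the cobase change of $A\xrightarrow{x}B\to C$ along $\alpha$, the column is the cobase change of $A\xrightarrow{\alpha}P_A\to A'$ along $x$, and $\xi$ is closed under cobase change.

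The two decisive steps are then the following. First, the middle row $P_A\to D\to C$ realizes the $\mathbb{E}$-extension $\alpha_{*}\delta$, where $\delta$ underlies $A\to B\to C$; identifying $\xi{\rm xt}_{\xi}^{1}(C,P_A)$ with the group of equivalence classes of $\xi$-triangles of $C$ by $P_A$, the class of this middle row is a member of $\xi{\rm xt}_{\xi}^{1}(C,P_A)=0$, so it splits and $D\cong P_A\oplus C$. Second, the middle column $B\to D\to A'$ is an $\mathbb{E}$-triangle in $\xi$ with $B,A'\in\mathcal{GP}(\xi)$, so $D\in\mathcal{GP}(\xi)$ because $\mathcal{GP}(\xi)$ is closed under extensions. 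Finally $C$ is a direct summand of $D$, and $\mathcal{GP}(\xi)$ is closed under direct summands (here the standing weak idempotent completeness enters), whence $C\in\mathcal{GP}(\xi)$.

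The main obstacle is the first decisive step, namely turning the cohomological vanishing $\xi{\rm xt}_{\xi}^{1}(C,P_A)=0$ into the genuine splitting of the middle-row $\mathbb{E}$-triangle. This rests on the comparison between the relative derived functor $\xi{\rm xt}_{\xi}^{1}(C,-)$ and equivalence classes of $\xi$-triangles ending in $C$, so that a $\xi$-triangle representing the zero class is forced to split; I would record this correspondence (available from the relative homological algebra of \cite{HZZ}) explicitly before invoking it. The remaining inputs—closure of $\mathcal{GP}(\xi)$ under extensions and under direct summands—are routine, both following from the Horseshoe Lemma (Lemma \ref{lem:horseshoe lemma}) together with the definition of a complete $\xi$-projective resolution, and are already available from \cite{HZZ}.
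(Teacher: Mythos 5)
Your forward direction and the overall architecture of your converse coincide with the paper's own proof: the paper performs exactly your pushout construction (its $G$, $P$, $K$ are your $D$, $P_A$, $A'$), gets both the row $P_A\to D\to C$ and the column $B\to D\to A'$ in $\xi$ by cobase change, invokes extension-closure of $\mathcal{GP}(\xi)$ from \cite{HZZ}, and finishes with summand-closure. The gap is precisely at the step you yourself flagged as the main obstacle, and it cannot be repaired by ``recording the correspondence from \cite{HZZ}'': neither \cite{HZZ} nor this paper establishes an identification of $\xi{\rm xt}_{\xi}^{1}(C,P_A)$ with equivalence classes of $\mathbb{E}$-triangles $P_A\to {?}\to C$ in $\xi$, and in the generality of extriangulated categories no such identification holds. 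Here $\xi{\rm xt}_{\xi}^{1}$ is defined purely as a derived functor via $\xi$-projective resolutions, and the comparison map $\mathcal{C}(X,Y)\to \xi{\rm xt}_{\xi}^{0}(X,Y)$ is not an isomorphism in general --- this failure is the very reason the paper needs Lemma \ref{lem:2.4} here and Condition $(\star)$ later. Concretely, in the triangulated case one checks that any $\xi$-triangle whose connecting morphism is a composite of two $\xi$-phantom maps represents the zero class in $\xi{\rm xt}_{\xi}^{1}$, yet such composites need not vanish, so ``represents the zero class'' does not imply ``splits''. Unwinding the definitions, vanishing of the class only produces an element of $\xi{\rm xt}_{\xi}^{0}(D,P_A)$ hitting the class of ${\rm id}_{P_A}$ in $\xi{\rm xt}_{\xi}^{0}(P_A,P_A)$; without further knowledge of $D$ you cannot lift that element to an actual morphism $D\to P_A$, which is what a splitting requires.

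The repair is simply to swap the order of your two ``decisive steps'', which turns your argument into the paper's. First use extension-closure on the column $B\to D\to A'$ (with $B,A'\in\mathcal{GP}(\xi)$) to conclude $D\in\mathcal{GP}(\xi)$. Then, since $D\in\mathcal{GP}(\xi)$ and $P_A\in\mathcal{P}(\xi)\subseteq\widetilde{\mathcal{P}}(\xi)$, Lemma \ref{lem:2.4} gives that $\mathcal{C}(D,P_A)\to\xi{\rm xt}_{\xi}^{0}(D,P_A)$ is an isomorphism, and $\mathcal{C}(P_A,P_A)\to\xi{\rm xt}_{\xi}^{0}(P_A,P_A)$ is an isomorphism because $P_A$ is $\xi$-projective. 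Feeding these into the long exact sequence of Lemma \ref{lem:long-exact-sequence} for $P_A\to D\to C$, whose relevant tail is $\xi{\rm xt}_{\xi}^{0}(D,P_A)\to\xi{\rm xt}_{\xi}^{0}(P_A,P_A)\to\xi{\rm xt}_{\xi}^{1}(C,P_A)=0$, shows that $\mathcal{C}(D,P_A)\to\mathcal{C}(P_A,P_A)$ is surjective; hence ${\rm id}_{P_A}$ factors through $P_A\to D$, the $\mathbb{E}$-triangle $P_A\to D\to C$ is split, and your direct-summand argument then correctly yields $C\in\mathcal{GP}(\xi)$. This is exactly the paper's proof of the ``if'' direction.
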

\begin{proof} The ``only if''  part follows from  \cite[Lemma 4.10(2)]{HZZ} and Lemma \ref{lem:2.4}.

For the ``if'' part, since $A\in\mathcal{GP}(\xi)$, there exists an $\mathbb{E}$-triangle $\xymatrix@C=1.5em{A\ar[r]&P\ar[r]&K\ar@{-->}[r]&}$ in $\xi$ with $P\in \mathcal{P}(\xi)$ and $K\in \mathcal{GP}(\xi)$.
Then we have the following commutative diagram:
$$\xymatrix{A\ar[r]\ar[d]&B\ar[r]\ar[d]&C\ar@{=}[d]\ar@{-->}[r]&\\
 P\ar[r]\ar[d]&G\ar[r]\ar[d]&C\ar@{-->}[r]&\\
 K\ar@{-->}[d]\ar@{=}[r]&K\ar@{-->}[d]&&\\
 &&&}$$
where all rows and columns are $\mathbb{E}$-triangles in $\xi$ because $\xi$ is closed under cobase change. It follows from Theorem \cite[Theorem 4.16]{HZZ} that $G\in \mathcal{P}(\xi)$. For the $\mathbb{E}$-triangle  $\xymatrix@C=1.5em{P\ar[r]&G\ar[r]&C\ar@{-->}[r]&}$, we have the following commutative diagram
$$\xymatrix@C=2em{&{\mathcal{C}}(C,P)\ar[r]\ar[d]_{\delta_1}&{\mathcal{C}}(G,P)\ar[r]\ar[d]_{\delta_2}^{\cong}&{\mathcal{C}}(P,P)
\ar[d]_{\delta_3}^{\cong}\ar@{-->}[r]&0&\\
0\ar[r]&{\rm{\xi}xt}_{\xi}^0(C,P)\ar[r]&{\rm {\xi}xt}_{\xi}^0(G,P)\ar[r]&{\rm {\xi}xt}_{\xi}^0(P,P)\ar[r]&0.}$$
by hypothesis. Hence the $\mathbb{E}$-triangle  $\xymatrix@C=1.5em{P\ar[r]&G\ar[r]&C\ar@{-->}[r]&}$ is split, and $C\in\mathcal{GP}(\xi)$.
\end{proof}

\begin{lem}\cite[Proposition 5.6]{HZZ} \label{pro5} Let  $M$ be an object in $\widetilde{\mathcal{GP}}(\xi)$. Then $\xi$-$\mathcal{G}{\rm pd} M\leqslant n$ if and only if
for any $\xi$-exact complex $\xymatrix{K_n\ar[r]&G_{n-1}\ar[r]&\cdots\ar[r]&G_1\ar[r]&G_0\ar[r]&A,}$ $K_n$ belongs to $\widetilde{\mathcal{GP}}(\xi)$ provided that $G_i$ is $\xi$-$\mathcal{G}$projective for $0\leq i\leq n-1$.
\end{lem}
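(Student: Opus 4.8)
The plan is to deduce both implications from the relative cohomology groups $\xi{\rm xt}^{*}_{\xi}(-,P)$ with $P\in\mathcal{P}(\xi)$, using Lemmas \ref{lem:2.4} and \ref{lem:2.6} as the two working tools. Set $K_0=M$ and split the given $\xi$-exact complex into $\mathbb{E}$-triangles $\sigma_i\colon K_{i+1}\to G_i\to K_i$ in $\xi$ for $0\le i\le n-1$, each with $G_i\in\mathcal{GP}(\xi)$; the goal is to show that the top syzygy $K_n$ is $\xi$-$\mathcal{G}$projective, i.e. $K_n\in\mathcal{GP}(\xi)$.

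For the ``only if'' direction, assume $\xi$-$\mathcal{G}{\rm pd}M\le n$ and fix $P\in\mathcal{P}(\xi)$, noting $P\in\widetilde{\mathcal{P}}(\xi)$. Since each $G_i\in\mathcal{GP}(\xi)$, Lemma \ref{lem:2.4} gives $\xi{\rm xt}^{j}_{\xi}(G_i,P)=0$ for $j>0$, so feeding each $\sigma_i$ into the contravariant long exact sequence of Lemma \ref{lem:long-exact-sequence} produces isomorphisms $\xi{\rm xt}^{j}_{\xi}(K_{i+1},P)\cong\xi{\rm xt}^{j+1}_{\xi}(K_i,P)$ for all $j\ge1$; iterating over $i$ yields $\xi{\rm xt}^{j}_{\xi}(K_n,P)\cong\xi{\rm xt}^{j+n}_{\xi}(M,P)$ for every $j\ge1$. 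Applying the same dimension shift to a length-$n$ $\xi$-$\mathcal{G}$projective resolution of $M$ (available because $\xi$-$\mathcal{G}{\rm pd}M\le n$) and invoking Lemma \ref{lem:2.4} on its $\xi$-$\mathcal{G}$projective top term shows $\xi{\rm xt}^{i}_{\xi}(M,P)=0$ for all $i>n$. Hence $\xi{\rm xt}^{j}_{\xi}(K_n,P)=0$ for every $j\ge1$ and every $P\in\mathcal{P}(\xi)$.

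It then remains to upgrade this vanishing to $\xi$-$\mathcal{G}$projectivity, and here is where I expect the real work. First observe that $K_n$ has finite $\xi$-$\mathcal{G}$projective dimension, inherited along the $\sigma_i$ from $\xi$-$\mathcal{G}{\rm pd}M<\infty$ and $G_i\in\mathcal{GP}(\xi)$ by the standard behaviour of the dimension under $\mathbb{E}$-triangles. Then argue by induction on $\xi$-$\mathcal{G}{\rm pd}K_n$: the case $0$ is exactly the claim, while if the dimension is positive I would choose an $\mathbb{E}$-triangle $\Omega\to P_0\to K_n$ in $\xi$ with $P_0\in\mathcal{P}(\xi)$ (enough $\xi$-projectives), so that $\xi$-$\mathcal{G}{\rm pd}\Omega<\xi$-$\mathcal{G}{\rm pd}K_n$ and, by one more shift, $\xi{\rm xt}^{j}_{\xi}(\Omega,P)\cong\xi{\rm xt}^{j+1}_{\xi}(K_n,P)=0$ for all $j\ge1$. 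The induction hypothesis gives $\Omega\in\mathcal{GP}(\xi)$, so the triangle $\Omega\to P_0\to K_n$ has both left-hand terms in $\mathcal{GP}(\xi)$ while $\xi{\rm xt}^{1}_{\xi}(K_n,P)=0$; Lemma \ref{lem:2.6} then forces $K_n\in\mathcal{GP}(\xi)$.

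For the ``if'' direction I would test the hypothesis on a single well-chosen complex. Using enough $\xi$-projectives together with $\mathcal{P}(\xi)\subseteq\mathcal{GP}(\xi)$, take a $\xi$-projective resolution of $M$ and truncate it to a $\xi$-exact complex $K_n\to P_{n-1}\to\cdots\to P_0\to M$ with all $P_i\in\mathcal{P}(\xi)\subseteq\mathcal{GP}(\xi)$. The hypothesis yields $K_n\in\mathcal{GP}(\xi)$, so this complex is a $\xi$-$\mathcal{G}$projective resolution of $M$ of length $n$, and its triangles witness $\xi$-$\mathcal{G}{\rm pd}M\le n$ directly from the inductive definition of the dimension. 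The main obstacle is the inductive step of the ``only if'' part: Lemma \ref{lem:2.6} needs an $\mathbb{E}$-triangle with two $\xi$-$\mathcal{G}$projective terms, so the abstract input (finiteness of $\xi$-$\mathcal{G}{\rm pd}K_n$ plus vanishing of all $\xi{\rm xt}^{\ge1}_{\xi}(K_n,P)$) must first be converted, via the syzygy $\Omega$ and the a priori finiteness of $\xi$-$\mathcal{G}{\rm pd}K_n$, into precisely such a triangle; this conversion, rather than any single cohomology computation, is the heart of the argument.
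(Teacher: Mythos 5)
First, a point of reference: this paper contains no proof of Lemma \ref{pro5} at all --- the statement is imported from \cite[Proposition 5.6]{HZZ} --- so your attempt can only be compared with the standard argument that reference follows. Your strategy is exactly that standard (Holm-type) argument, and its cohomological half is correct: splitting the complex into $\mathbb{E}$-triangles $\sigma_i$, the dimension-shifting isomorphisms $\xi{\rm xt}_{\xi}^{j}(K_n,P)\cong\xi{\rm xt}_{\xi}^{j+n}(M,P)$ obtained from Lemmas \ref{lem:long-exact-sequence} and \ref{lem:2.4}, the vanishing $\xi{\rm xt}_{\xi}^{i}(M,P)=0$ for $i>n$ via a length-$n$ $\xi$-$\mathcal{G}$projective resolution, the truncation argument for the ``if'' direction, and the endgame via Lemma \ref{lem:2.6} are all sound. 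You also silently, and correctly, read the statement the way it is actually used in Theorem \ref{prop:2.7}: the complex should end in $M$, and the conclusion should be $K_n\in\mathcal{GP}(\xi)$ rather than $K_n\in\widetilde{\mathcal{GP}}(\xi)$ (with the literal reading the ``if'' direction would fail, since every syzygy of every object of $\widetilde{\mathcal{GP}}(\xi)$ again lies in $\widetilde{\mathcal{GP}}(\xi)$).

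The genuine gap sits exactly where you yourself locate ``the heart of the argument''. Your induction rests on two structural facts that are asserted but never proved, and that are proved nowhere in this paper: (A) $\xi\textrm{-}\mathcal{G}{\rm pd}K_n<\infty$, claimed to be ``inherited along the $\sigma_i$ by the standard behaviour of the dimension under $\mathbb{E}$-triangles''; and (B) for a triangle $\Omega\to P_0\to K_n$ in $\xi$ with $P_0\in\mathcal{P}(\xi)$, one has $\xi\textrm{-}\mathcal{G}{\rm pd}\,\Omega\leq\xi\textrm{-}\mathcal{G}{\rm pd}K_n-1$. Neither is formal in this setting: the inductive definition of $\xi$-$\mathcal{G}$projective dimension only asserts the existence of \emph{some} $\mathbb{E}$-triangle $K\to G\to X$ in $\xi$ with $G\in\mathcal{GP}(\xi)$ and smaller dimension of $K$, so transporting the bound to the \emph{given} syzygies requires comparing two $\mathbb{E}$-triangles over the same base via Lemma \ref{lem1}, the splitting of $\mathbb{E}$-triangles in $\xi$ ending in a $\xi$-projective object, a horseshoe-type estimate (Lemma \ref{lem:horseshoe lemma}) for extensions, and the closure of $\mathcal{GP}(\xi)$ under extensions, cocones and direct summands as in \cite[Theorem 4.16]{HZZ}. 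Without (A) your induction has no starting point; without (B) its inductive step collapses. These facts are true and are established in \cite{HZZ} before Proposition 5.6, but in a self-contained proof they carry most of the real work. (An alternative endgame that avoids (B), though it still needs (A): produce, by the approximation technique of \cite[Proposition 5.9]{HZZ}, an $\mathbb{E}$-triangle $K\to G\to K_n$ in $\xi$ with $G\in\mathcal{GP}(\xi)$ and $\xi\textrm{-}{\rm pd}K<\infty$; your vanishing of $\xi{\rm xt}_{\xi}^{\geq 1}(K_n,-)$ on $\mathcal{P}(\xi)$ then gives $\xi{\rm xt}_{\xi}^{1}(K_n,K)=0$ by induction on $\xi\textrm{-}{\rm pd}K$, so the triangle splits and $K_n$ is a direct summand of $G$.)
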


We are now in a position to prove the main result of this section.

\begin{thm}\label{prop:2.7} Let $M$ be an object in $\widetilde{\mathcal{GP}}(\xi)$. Then the following are equivalent:
 \begin{enumerate}
\item[{\rm (1)}] $\xi\textrm{-}\mathcal{G}{\rm pd}M\leq n$;
\item[{\rm (2)}] $\xi{\rm xt}_{\xi}^{i}(M,Q)=0$ for any $Q\in{\widetilde{\mathcal{P}}(\xi)}$ and $i\geq {n+1}$;
\item[{\rm (3)}] $\xi{\rm xt}_{\xi}^{i}(M,Q)=0$ for any $Q\in{\mathcal{P}(\xi)}$ and $i\geq {n+1}$.
\end{enumerate}
\noindent Furthermore, we have the following equalities:
\vspace{2mm}
\begin{center}
{$\xi\textrm{-}\mathcal{G}{\rm pd}M={\rm sup}\{ i\in{\mathbb{N}_0} \ | \ \xi{\rm xt}_{\xi}^{i}(M,Q)(M,Q) \neq 0 \ \textrm{for} \ \textrm{some} \ Q\in{\mathcal{P}(\xi)}\}$

\vspace{2mm}
\hspace{14mm} $={\rm sup}\{ i\in{\mathbb{N}_0} \ | \ \xi{\rm xt}_{\xi}^{i}(M,Q)(M,Q) \neq 0 \ \textrm{for} \ \textrm{some} \ Q\in{\widetilde{\mathcal{P}}(\xi)}\}.$
}
\end{center}
\end{thm}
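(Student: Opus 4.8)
The plan is to establish the cycle $(1)\Rightarrow(2)\Rightarrow(3)\Rightarrow(1)$ and then to read the two displayed equalities off from these equivalences. The implication $(2)\Rightarrow(3)$ is immediate, since $\mathcal{P}(\xi)\subseteq\widetilde{\mathcal{P}}(\xi)$. For $(1)\Rightarrow(2)$ I would unfold the inductive definition of $\xi$-$\mathcal{G}{\rm pd}M\leq n$ into a splice of $\mathbb{E}$-triangles $K_{j+1}\to G_j\to K_j\dashrightarrow$ in $\xi$ with $K_0=M$, each $G_j\in\mathcal{GP}(\xi)$, and $K_n\in\mathcal{GP}(\xi)$. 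Applying the long exact sequence of Lemma \ref{lem:long-exact-sequence} in the first variable and using that $\xi{\rm xt}_{\xi}^{i}(G_j,Q)=0$ for $i\geq 1$ and $Q\in\widetilde{\mathcal{P}}(\xi)$ (Lemma \ref{lem:2.4}), the connecting isomorphisms give the dimension shift $\xi{\rm xt}_{\xi}^{i}(M,Q)\cong\xi{\rm xt}_{\xi}^{\,i-n}(K_n,Q)$ for $i\geq n+1$; the right-hand side vanishes by Lemma \ref{lem:2.4} because $K_n\in\mathcal{GP}(\xi)$, giving (2).

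The substance of the theorem is $(3)\Rightarrow(1)$, which I would reduce to the following sub-claim: \emph{if $N\in\widetilde{\mathcal{GP}}(\xi)$ and $\xi{\rm xt}_{\xi}^{i}(N,P)=0$ for all $i\geq 1$ and all $P\in\mathcal{P}(\xi)$, then $N\in\mathcal{GP}(\xi)$.} Granting this, since $M\in\widetilde{\mathcal{GP}}(\xi)$ I would fix a resolution of $M$ by $\xi$-$\mathcal{G}$projectives, split it into $\mathbb{E}$-triangles as above, and take its $n$-th syzygy $K_n$; this $K_n$ again lies in $\widetilde{\mathcal{GP}}(\xi)$ (syzygies of objects of finite $\xi$-$\mathcal{G}$projective dimension again have finite such dimension), and the same dimension shifting together with hypothesis (3) yields $\xi{\rm xt}_{\xi}^{i}(K_n,P)\cong\xi{\rm xt}_{\xi}^{\,i+n}(M,P)=0$ for all $i\geq 1$ and $P\in\mathcal{P}(\xi)$. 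The sub-claim then forces $K_n\in\mathcal{GP}(\xi)$, whence $\xi$-$\mathcal{G}{\rm pd}M\leq n$ by Lemma \ref{pro5}.

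I would prove the sub-claim by induction on $t=\xi$-$\mathcal{G}{\rm pd}N$, and this inductive gluing is the step I expect to be the crux. The base case $t=0$ is the definition of $\mathcal{GP}(\xi)$. For $t\geq 1$, the inductive definition of $\xi$-$\mathcal{G}{\rm pd}$ supplies an $\mathbb{E}$-triangle $N'\to G\to N\dashrightarrow$ in $\xi$ with $G\in\mathcal{GP}(\xi)$ and $\xi$-$\mathcal{G}{\rm pd}N'\leq t-1$. Feeding this triangle into the long exact sequence of Lemma \ref{lem:long-exact-sequence}, and using $\xi{\rm xt}_{\xi}^{i}(G,P)=0$ for $i\geq 1$ (Lemma \ref{lem:2.4}), the vanishing hypothesis propagates from $N$ to $N'$, so $\xi{\rm xt}_{\xi}^{i}(N',P)=0$ for all $i\geq 1$ and $P\in\mathcal{P}(\xi)$. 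The induction hypothesis then gives $N'\in\mathcal{GP}(\xi)$; since also $G\in\mathcal{GP}(\xi)$, both outer terms of $N'\to G\to N\dashrightarrow$ are $\xi$-$\mathcal{G}$projective, so Lemma \ref{lem:2.6} applies and tells us that $N\in\mathcal{GP}(\xi)$ if and only if $\xi{\rm xt}_{\xi}^{1}(N,P)=0$ for every $P\in\mathcal{P}(\xi)$. The latter is part of our hypothesis, hence $N\in\mathcal{GP}(\xi)$, closing the induction. The care needed here lies in propagating the Ext-vanishing correctly and in arranging both outer terms to be $\xi$-$\mathcal{G}$projective so that Lemma \ref{lem:2.6} is applicable.

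Finally, the two displayed equalities follow formally from the equivalences. Writing $d=\xi$-$\mathcal{G}{\rm pd}M$ and letting $s$ and $s'$ denote the two suprema taken over $Q\in\mathcal{P}(\xi)$ and $Q\in\widetilde{\mathcal{P}}(\xi)$ respectively, the equivalence $(1)\Leftrightarrow(3)$ says $d\leq n\Leftrightarrow s\leq n$ for every $n\geq 0$, forcing $d=s$, while $(1)\Leftrightarrow(2)$ gives $d=s'$ in the same way. Hence all three quantities coincide, which is the asserted chain of equalities.
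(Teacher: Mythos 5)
Your proposal is correct and takes essentially the same route as the paper: $(1)\Rightarrow(2)$ by dimension shifting along a resolution together with Lemma \ref{lem:2.4}, the trivial $(2)\Rightarrow(3)$, and $(3)\Rightarrow(1)$ by an induction on the finite $\xi$-$\mathcal{G}$projective dimension whose crux is Lemma \ref{lem:2.6} applied to an $\mathbb{E}$-triangle with both outer terms $\xi$-$\mathcal{G}$projective. The only difference is bookkeeping: you isolate the $n=0$ case as a sub-claim (proved by upward induction on $\xi$-$\mathcal{G}{\rm pd}$) and apply it to the $n$-th syzygy, whereas the paper runs a downward induction on the dimension bound of $M$ directly via Lemma \ref{pro5}; the mathematical content is the same.
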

\begin{proof} $(1)\Rightarrow (2)$. Let
$\xymatrix@C=2em{\cdots\ar[r]&P_n\ar[r]&\cdots \ar[r]&P_1\ar[r]&P_0\ar[r]&M}$ be an $\xi$-projective resolution of $M$. Then there exists an $\mathbb{E}$-triangle $\xymatrix@C=2em{K_{n+1}\ar[r]&P_n\ar[r]&K_n\ar@{-->}[r]&}$ in $\xi$ with $K_0=M$ for any integer $n\geq0$. Thus $K_n$ belongs to $\widetilde{\mathcal{GP}}(\xi)$ by Lemma \ref{pro5}. Note that $\xi{\rm xt}_{\xi}^{n+j}(M,Q)\cong \xi{\rm xt}_{\xi}^{j}(K_n,Q)$ for any $Q\in{\widetilde{\mathcal{P}}(\xi)}$ and $j\geq {1}$. It follows from Lemma \ref{lem:2.4} that $\xi{\rm xt}_{\xi}^{j}(K_n,Q)=0$ for any $j\geq {1}$. So $\xi{\rm xt}_{\xi}^{i}(M,Q)=0$ for any $Q\in{\widetilde{\mathcal{P}}(\xi)}$ and $i\geq {n+1}$.

$(2)\Rightarrow (3)$ is trivial.

$(3)\Rightarrow (1)$.  By hypothesis, we assume that $\xi\textrm{-}\mathcal{G}{\rm pd}M\leq m$. If $m\leq n$, then (1) holds. If $m> n$, we consider the following $\xi$-projective resolution of $M$
$$\xymatrix@C=2em{\cdots\ar[r]&P_n\ar[r]&\cdots \ar[r]&P_1\ar[r]&P_0\ar[r]&M.}$$
Thus there exists an $\mathbb{E}$-triangle
$$\xymatrix@C=2em{K_{i+1}\ar[r]&P_i\ar[r]&K_i\ar@{-->}[r]&}$$ in $\xi$ with $K_0=M$ for any integer $i\geq0$. Since $\xi{\rm xt}_{\xi}^{m}(M,Q)\cong \xi{\rm xt}_{\xi}^{1}(K_{m-1},Q)$ for any $Q\in{{\mathcal{P}}(\xi)}$, $\xi{\rm xt}_{\xi}^{1}(K_{m-1},Q)=0$ by (3). Note that $\xymatrix@C=2em{K_{m}\ar[r]&P_{m-1}\ar[r]&K_{m-1}\ar@{-->}[r]&}$ is an $\mathbb{E}$-triangle in $\xi$ with $K_{m}$ and $P_{m-1}$ are $\xi$-$\mathcal{G}$projective by Lemma \ref{pro5}. It follows from Lemma \ref{lem:2.6} that $K_{m-1}$ is $\xi$-$\mathcal{G}$projective. Thus we get that $\xi\textrm{-}\mathcal{G}{\rm pd}M\leq m-1$. By proceeding in this manner, we get that $m\leq n$. So $\xi\textrm{-}\mathcal{G}{\rm pd}M\leq n$.

The last formulas in the theorem for determination of $\xi\textrm{-}\mathcal{G}{\rm pd}M$ are a direct consequence
of the equivalence between (1)-(3).
\end{proof}

\section{\bf Global Gorenstein homological dimensions for extriangulated categories}

We begin this section with the following concept which is inspired from (co)generating subcategories of triangulated categories.

\begin{definition}\label{df:generated-subcategory}
Let $\mathcal{X}$ be a full subcategory of $\mathcal{C}$. Then $\mathcal{X}$ is called a \emph{generating subcategory} of $\mathcal{C}$ if for
all $A\in{\mathcal{C}}$, $\mathcal{C}(\mathcal{X},A)=0$ implies that $A=0$. Dually, a full subcategory of $\mathcal{Y}$ of $\mathcal{C}$ is called a \emph{cogenerating subcategory} of $\mathcal{C}$ if for
all $B\in{\mathcal{C}}$, $\mathcal{C}(B,\mathcal{X})=0$ implies that $B=0$.
\end{definition}

Next we assign two invariants to an extriangulated category, which is motivated by Gedrich and Gruenberg's invariants of a ring \cite{GG}, and Asadollahi and Salarian's invariants to a triangulated category \cite{AS2}.
\begin{definition}\label{df:silp-spli}
Let $\mathcal{C}$ be an extriangulated category. We assign two invariants to $\mathcal{C}$ as follows:
{\rm $$\xi\textrm{-}{\rm silp}\mathcal{C}=\sup\{\xi\textrm{-}{\rm id}P \ | \ P\in{\mathcal{P}(\xi)}\},$$
$$\xi\textrm{-}{\rm spli}\mathcal{C}=\sup\{\xi\textrm{-}{\rm pd}I \ | \ I\in{\mathcal{I}(\xi)}\}.$$}
\end{definition}

\begin{prop}\label{prop:3.1}If for any extriangulated category $\mathcal{C}$ with both $\xi\textrm{-}{\rm silp}\mathcal{C}$ and $\xi\textrm{-}{\rm spli}\mathcal{C}$
are finite, then they are equal.
\end{prop}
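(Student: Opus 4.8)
The plan is to deduce the equality from two dual ``finitistic dimension'' bounds. Write $n=\xi\textrm{-}{\rm silp}\,\mathcal{C}$ and $m=\xi\textrm{-}{\rm spli}\,\mathcal{C}$, both finite by hypothesis, and isolate: (A) every object $M$ of finite $\xi$-projective dimension satisfies $\xi\textrm{-}{\rm pd}\,M\le n$; and dually (B) every object $N$ of finite $\xi$-injective dimension satisfies $\xi\textrm{-}{\rm id}\,N\le m$. Granting these, the proposition follows at once: each $P\in\mathcal{P}(\xi)$ has $\xi\textrm{-}{\rm id}\,P\le n<\infty$, so (B) gives $\xi\textrm{-}{\rm id}\,P\le m$ and hence $n\le m$; symmetrically each $I\in\mathcal{I}(\xi)$ has $\xi\textrm{-}{\rm pd}\,I\le m<\infty$, so (A) gives $\xi\textrm{-}{\rm pd}\,I\le n$ and hence $m\le n$. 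Thus $\xi\textrm{-}{\rm silp}\,\mathcal{C}=n=m=\xi\textrm{-}{\rm spli}\,\mathcal{C}$. This is exactly where both finiteness hypotheses enter, each supplying the ``finite dimension'' input for one of the two lemmas.

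For (A) I would run the classical top-syzygy argument inside $\xi{\rm xt}$. Assume $\xi\textrm{-}{\rm pd}\,M=d$ with $1\le d<\infty$ (the case $d=0$ is trivial) and fix an $\xi$-projective resolution with $\xi$-syzygies $K_i$; then $K_d\in\mathcal{P}(\xi)$ while the $\mathbb{E}$-triangle $K_d\to P_{d-1}\to K_{d-1}$ in $\xi$ is non-split, since a splitting would make $K_{d-1}$ $\xi$-projective and force $\xi\textrm{-}{\rm pd}\,M\le d-1$. Feeding this triangle into the contravariant long exact sequence of Lemma \ref{lem:long-exact-sequence} with fixed second argument $K_d$, and using that $\mathcal{C}(K_d,K_d)\to\xi{\rm xt}_{\xi}^0(K_d,K_d)$ is an isomorphism because $K_d\in\mathcal{P}(\xi)$, the connecting map carries ${\rm id}_{K_d}$ to a class that is nonzero precisely because the triangle does not split; hence $\xi{\rm xt}_{\xi}^1(K_{d-1},K_d)\ne0$. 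Dimension shifting along the resolution then gives $\xi{\rm xt}_{\xi}^{d}(M,K_d)\cong\xi{\rm xt}_{\xi}^1(K_{d-1},K_d)\ne0$ with $K_d\in\mathcal{P}(\xi)$; since $\xi\textrm{-}{\rm id}\,K_d\le n$ forces $\xi{\rm xt}_{\xi}^{i}(-,K_d)=0$ for all $i>n$, we conclude $d\le n$. Statement (B) is the formal dual: take the top $\xi$-cosyzygy $L$ of an injective coresolution of $N$, which lies in $\mathcal{I}(\xi)$, detect $\xi{\rm xt}_{\xi}^{d'}(L,N)\ne0$ through the covariant long exact sequence and the isomorphism $\mathcal{C}(L,L)\cong\xi{\rm xt}_{\xi}^0(L,L)$, and read off $d'\le m$ from $\xi\textrm{-}{\rm pd}\,L\le m$.

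The background I would invoke without detailed proof consists of the $\xi{\rm xt}$-characterizations of $\xi$-dimension (so that $\xi\textrm{-}{\rm id}\,K_d\le n$ means $\xi{\rm xt}_{\xi}^{i}(-,K_d)=0$ for $i>n$, and dually for $\xi\textrm{-}{\rm pd}$) and the dimension-shift isomorphisms $\xi{\rm xt}_{\xi}^{i+k}(M,Y)\cong\xi{\rm xt}_{\xi}^{i}(K_k,Y)$ for $i\ge1$; both follow formally from Lemma \ref{lem:long-exact-sequence} together with the vanishing of positive $\xi{\rm xt}$ out of $\xi$-projectives and into $\xi$-injectives, exactly as in the proof of Theorem \ref{prop:2.7}. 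The one genuinely load-bearing point, and the place I would be most careful, is the detection step: that a non-split $\mathbb{E}$-triangle in $\xi$ represents a nonzero element of $\xi{\rm xt}_{\xi}^1$. I would keep this entirely inside the long exact sequence by evaluating the connecting homomorphism on the identity morphism and invoking the degree-$0$ comparison isomorphism, so that non-splitness becomes literally the failure of ${\rm id}$ to lift. This confines the whole argument to the machinery already available and, as the abstract advertises, keeps the proof ``not far from the usual module or triangulated case.''
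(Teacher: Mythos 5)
Your proposal is correct and coincides with the paper's own argument: the paper proves Proposition \ref{prop:3.1} by deferring to \cite[Proposition 4.2]{AS2}, and that proof is precisely your two finitistic bounds --- any object of finite $\xi$-projective dimension $d$ has a nonzero $\xi{\rm xt}_{\xi}^{d}$ into its ($\xi$-projective) top syzygy, forcing $d\le\xi\textrm{-}{\rm silp}\,\mathcal{C}$, and dually for $\xi$-injective dimension --- applied to the objects of $\mathcal{I}(\xi)$ and $\mathcal{P}(\xi)$ respectively. Your detection step (evaluating the connecting map on the identity via the degree-$0$ comparison isomorphism) is exactly the right way to make this work with the $\xi{\rm xt}$-machinery of Lemma \ref{lem:long-exact-sequence}, so there is no gap.
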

\begin{proof} The proof is similar to \cite[Proposition 4.2]{AS2}.
\end{proof}

The following lemma is essentially taken from \cite[Propositions 3.7 and 3.8]{RL1}. However, it can be easily
extended to our setting by noting that Theorem \ref{prop:2.7} holds.
\begin{lem}\label{prop:3.2}Let $\mathcal{C}$ be an extriangulated category.
\begin{enumerate}
\item[{\rm (1)}] If $\sup\{\xi\textrm{-}\mathcal{G}{\rm id}M \ | \ \textrm{for} \ \textrm{any} \ M\in{\mathcal{C}}\}\leq \infty$ and $\mathcal{P}(\xi)$ is a generating subcategory of $\mathcal{C}$,
then $\xi\textrm{-}{\rm spli}\mathcal{C}<\infty$.

\item[{\rm (2)}] If  $\sup\{\xi\textrm{-}\mathcal{G}{\rm pd}M \ | \ \textrm{for} \ \textrm{any} \ M\in{\mathcal{C}}\}\leq \infty$ and $\mathcal{I}(\xi)$ is a cogenerating subcategory of $\mathcal{C}$,
then $\xi\textrm{-}{\rm silp}\mathcal{C}<\infty$.
\end{enumerate}
\end{lem}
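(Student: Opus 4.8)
The plan is to establish (1) in detail and obtain (2) by the dual argument. Write $n:=\sup\{\xi\textrm{-}\mathcal{G}{\rm id}\,M\mid M\in\mathcal{C}\}$, which I read as finite; the goal is to bound $\xi\textrm{-}{\rm pd}\,I$ uniformly over all $I\in\mathcal{I}(\xi)$, since this yields $\xi\textrm{-}{\rm spli}\,\mathcal{C}\leq n<\infty$.

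First I would record the $\xi$-injective analogue of Theorem \ref{prop:2.7}, which holds by dualising Lemmas \ref{lem:2.4}, \ref{lem:2.6} and \ref{pro5}. Since $n<\infty$, every object of $\mathcal{C}$ lies in $\widetilde{\mathcal{GI}}(\xi)$, so this dual statement applies to each $M$ and gives $\xi\textrm{-}\mathcal{G}{\rm id}\,M=\sup\{i\mid \xi{\rm xt}_\xi^i(Q,M)\neq0\ \textrm{for some}\ Q\in\mathcal{I}(\xi)\}$. As $\xi\textrm{-}\mathcal{G}{\rm id}\,M\leq n$ for all $M$, I conclude $\xi{\rm xt}_\xi^i(Q,M)=0$ for every $Q\in\mathcal{I}(\xi)$, every $M\in\mathcal{C}$ and every $i>n$. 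Fixing an $\xi$-injective object $I$ and specialising $Q=I$ gives $\xi{\rm xt}_\xi^i(I,M)=0$ for all $M$ and all $i>n$.

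Next I would convert this vanishing into the bound $\xi\textrm{-}{\rm pd}\,I\leq n$. Choose an $\xi$-projective resolution $\mathbf{P}\to I$ with $\mathbb{E}$-triangles $K_{i+1}\to P_i\to K_i\dashrightarrow$ in $\xi$ and $K_0=I$. Dimension shifting along these triangles through the long exact sequences of Lemma \ref{lem:long-exact-sequence}, together with $\xi{\rm xt}_\xi^{\geq1}(P_i,-)=0$, yields $\xi{\rm xt}_\xi^1(K_n,X)\cong\xi{\rm xt}_\xi^{n+1}(I,X)=0$ for every object $X$. It then remains to see that $\xi{\rm xt}_\xi^1(K_n,-)=0$ forces the syzygy triangle $K_{n+1}\to P_n\to K_n\dashrightarrow$ to split, so that $K_n$ is a direct summand of the $\xi$-projective $P_n$ and hence $\xi$-projective; this gives $\xi\textrm{-}{\rm pd}\,I\leq n$, and taking the supremum over $I\in\mathcal{I}(\xi)$ completes (1). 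Part (2) is entirely dual: global finiteness of $\xi\textrm{-}\mathcal{G}{\rm pd}$ and Theorem \ref{prop:2.7} give $\xi{\rm xt}_\xi^i(M,P)=0$ for all $M$, all $P\in\mathcal{P}(\xi)$ and all $i$ exceeding the (finite) global $\xi\textrm{-}\mathcal{G}{\rm pd}$; dimension shifting in an $\xi$-injective coresolution of a fixed $P$ and splitting the resulting cosyzygy triangle bound $\xi\textrm{-}{\rm id}\,P$, whence $\xi\textrm{-}{\rm silp}\,\mathcal{C}<\infty$.

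The main obstacle is exactly this last implication. In an extriangulated category inflations and deflations need not be monic or epic (already in the triangulated case), so the natural map $\mathcal{C}(A,B)\to\xi{\rm xt}_\xi^0(A,B)$ is generally not an isomorphism, and the vanishing of the derived functor $\xi{\rm xt}_\xi^1(K_n,-)$ does not by itself split the defining $\mathbb{E}$-triangle of $K_n$. This is precisely where the hypothesis that $\mathcal{P}(\xi)$ is a generating subcategory is used: after lifting morphisms out of $\xi$-projectives through the deflation $P_n\to K_n$, the generating property lets one promote the retraction that exists at the level of $\xi{\rm xt}_\xi^0$ to an honest retraction of $K_{n+1}\to P_n$, so the triangle genuinely splits. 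In (2) the cogenerating hypothesis on $\mathcal{I}(\xi)$ plays the symmetric role for the cosyzygy. Once this bridge is secured, the remainder is the routine dimension-shifting bookkeeping and is no harder than the triangulated case treated by Ren and Liu.
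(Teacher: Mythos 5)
Your reduction follows the same route as the paper's proof (which simply cites \cite[Propositions 3.7 and 3.8]{RL1} and asserts that the argument extends to extriangulated categories via Theorem \ref{prop:2.7}): dualize Theorem \ref{prop:2.7} to get $\xi{\rm xt}_{\xi}^{i}(I,M)=0$ for all $M$ and $i>n$, dimension-shift along an $\xi$-projective resolution of $I\in\mathcal{I}(\xi)$ to get $\xi{\rm xt}_{\xi}^{1}(K_n,X)=0$ for all $X$, and then split the syzygy $\mathbb{E}$-triangle $K_{n+1}\to P_n\to K_n$ so that $K_n$ becomes a direct summand of $P_n$. The problem is that the one step carrying all the content is asserted rather than proved. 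You correctly flag it as ``the main obstacle'' --- vanishing of the derived functor $\xi{\rm xt}_{\xi}^{1}$ does not by itself split an $\mathbb{E}$-triangle, precisely because $\mathcal{C}(-,-)\to\xi{\rm xt}_{\xi}^{0}(-,-)$ need not be an isomorphism --- but your resolution, ``the generating property lets one promote the retraction that exists at the level of $\xi{\rm xt}^{0}$ to an honest retraction,'' is a restatement of the goal, not an argument.

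To see what is actually missing, write $g_n\colon K_{n+1}\to P_n$ and $f_{n+1}\colon P_{n+1}\to K_{n+1}$ for the resolution maps. From $\xi{\rm xt}_{\xi}^{1}(K_n,K_{n+1})=0$ one extracts $\beta\colon P_n\to K_{n+1}$ with $(1_{K_{n+1}}-\beta g_n)f_{n+1}=0$; since every morphism $Q\to K_{n+1}$ with $Q\in\mathcal{P}(\xi)$ factors through $f_{n+1}$ (the $\mathbb{E}$-triangle $K_{n+2}\to P_{n+1}\to K_{n+1}$ lies in $\xi$), the endomorphism $\beta g_n$ induces the identity on $\mathcal{C}(Q,K_{n+1})$ for every $Q\in\mathcal{P}(\xi)$. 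The whole point is now to conclude that $\beta g_n$ is invertible (then $(\beta g_n)^{-1}\beta$ retracts $g_n$ and the triangle splits, equivalently the class of the syzygy triangle in $\mathbb{E}(K_n,K_{n+1})$ vanishes). In the triangulated setting of Ren--Liu this is done by completing $\beta g_n$ to a triangle, using that $\mathcal{P}(\xi)$ is closed under suspension, and applying the long exact sequence together with the generating hypothesis to kill the cone; an extriangulated category has neither cones of arbitrary morphisms nor a suspension functor, so that mechanism does not transfer, and it is exactly this kind of Hom-versus-$\xi{\rm xt}$ comparison that forces the paper to impose Condition $(\star)$ in Theorem \ref{thm:3.4}. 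Until you supply an extriangulated substitute for this step (and its dual in part (2)), the proposal does not prove the lemma.
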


\begin{lem}\label{lem:2.3} If $\xymatrix{A\ar[r]^x&B\ar[r]^{y}&C\ar@{-->}[r]^{\delta}&}$ is an $\mathbb{E}$-triangle  in $\xi$ with $C\in{\mathcal{GP}(\xi)}$, then it is  $\mathcal{C}(-,\widetilde{\mathcal{I}}(\xi))$-exact.
\end{lem}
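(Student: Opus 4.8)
Fix $W\in\widetilde{\mathcal{I}}(\xi)$; the goal is to prove that $0\to\mathcal{C}(C,W)\xrightarrow{y^{*}}\mathcal{C}(B,W)\xrightarrow{x^{*}}\mathcal{C}(A,W)\to0$ is exact, the hypothesis $C\in\mathcal{GP}(\xi)$ entering only through Lemma~\ref{lem:2.4}. Exactness at the middle term, $\ker x^{*}=\im y^{*}$, is automatic: it is part of the natural exact sequence $\mathcal{C}(C,W)\xrightarrow{y^{*}}\mathcal{C}(B,W)\xrightarrow{x^{*}}\mathcal{C}(A,W)\xrightarrow{\partial}\mathbb{E}(C,W)$ attached by \cite{NP} to the $\mathbb{E}$-triangle $A\xrightarrow{x}B\xrightarrow{y}C\dashrightarrow\delta$. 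Thus only the injectivity of $y^{*}$ and the surjectivity of $x^{*}$ need argument, and it is the latter that is the real point.

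For injectivity I would compare the Hom sequence with the $\xi{\rm xt}$ sequence. By Lemma~\ref{lem:long-exact-sequence} the relevant long exact sequence starts $0\to\xi{\rm xt}_{\xi}^{0}(C,W)\to\xi{\rm xt}_{\xi}^{0}(B,W)\to\cdots$, so $\xi{\rm xt}_{\xi}^{0}(C,W)\to\xi{\rm xt}_{\xi}^{0}(B,W)$ is injective. Since $C\in\mathcal{GP}(\xi)$ and $W\in\widetilde{\mathcal{I}}(\xi)$, the natural comparison map $\mathcal{C}(C,W)\to\xi{\rm xt}_{\xi}^{0}(C,W)$ is an isomorphism by Lemma~\ref{lem:2.4}; chasing the naturality square of this comparison map then forces $y^{*}\colon\mathcal{C}(C,W)\to\mathcal{C}(B,W)$ to be injective.

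For surjectivity of $x^{*}$ it is equivalent to show that the connecting map $\partial\colon\mathcal{C}(A,W)\to\mathbb{E}(C,W)$, $a\mapsto a_{*}\delta$, is zero. The obstacle is that in an arbitrary extriangulated category inflations need not be monic nor deflations epic, so a naive lifting of a morphism $a\colon A\to W$ along $x$ is unavailable; I would instead compute with the extension $a_{*}\delta$ itself. Since $\xi$-${\rm id}\,W<\infty$, choose the first step $W\xrightarrow{u}I\xrightarrow{v}W'\dashrightarrow\eta$ of a finite $\xi$-injective coresolution, so that $I\in\mathcal{I}(\xi)$ and $W'\in\widetilde{\mathcal{I}}(\xi)$ (if $W$ is itself $\xi$-injective then $\partial=0$ is immediate from the definition). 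Because $I$ is $\xi$-injective, the given $\mathbb{E}$-triangle is $\mathcal{C}(-,I)$-exact, so its connecting map into $\mathbb{E}(C,I)$ vanishes and $u_{*}(a_{*}\delta)=(ua)_{*}\delta=0$. By exactness of the covariant sequence of \cite{NP} for $W\xrightarrow{u}I\xrightarrow{v}W'$ this forces $a_{*}\delta=(c')^{*}\eta$ for some $c'\colon C\to W'$.

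It then remains to lift $c'$ through $v$. As $C\in\mathcal{GP}(\xi)$ and $W\in\widetilde{\mathcal{I}}(\xi)$, Lemma~\ref{lem:2.4} gives $\xi{\rm xt}_{\xi}^{1}(C,W)=0$, whence the covariant $\xi{\rm xt}$ sequence for $W\to I\to W'$ makes $\xi{\rm xt}_{\xi}^{0}(C,I)\to\xi{\rm xt}_{\xi}^{0}(C,W')$ surjective; transporting through the natural isomorphisms of Lemma~\ref{lem:2.4} (applicable because $I,W'\in\widetilde{\mathcal{I}}(\xi)$) shows $v_{*}\colon\mathcal{C}(C,I)\to\mathcal{C}(C,W')$ is surjective, so $c'=v\tilde c$ for some $\tilde c\colon C\to I$. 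Using the identity $v^{*}\eta=0$ (the extension of an $\mathbb{E}$-triangle is killed by pullback along its own deflation), we obtain $a_{*}\delta=(c')^{*}\eta=\tilde c^{*}(v^{*}\eta)=0$. Since $a$ was arbitrary, $\partial=0$, so $x^{*}$ is surjective, which is the remaining assertion. The crux is exactly this last step, where Lemma~\ref{lem:2.4} is used twice — once for the vanishing $\xi{\rm xt}_{\xi}^{1}(C,W)=0$ and once to identify $\xi{\rm xt}_{\xi}^{0}$ with Hom so that $c'$ becomes liftable — thereby circumventing the failure of deflations to be epimorphisms that would sink a direct diagram chase.
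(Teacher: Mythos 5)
Your proof is correct, and it takes a genuinely different route from the paper's. The paper first replaces the given $\mathbb{E}$-triangle by a projective presentation $K\to P\to C$ of $C$ (invoking \cite[Lemma 4.10(1)]{HZZ} to justify that this suffices), and then proves exactness of $0\to\mathcal{C}(C,M)\to\mathcal{C}(P,M)\to\mathcal{C}(K,M)\to0$ by induction on $\xi\textrm{-}{\rm id}M$, the inductive step being an explicit chase through a $3\times3$ grid of Hom-groups that uses (ET3), \cite[Corollary 3.5]{NP}, the snake lemma and the $3\times3$ lemma. You instead stay with the original triangle and translate both nontrivial assertions into the derived-functor language of Section 3: middle exactness is free from Nakaoka--Palu's six-term exact sequence; injectivity of $y^{*}$ follows from the naturality square of the comparison map $\mathcal{C}(-,-)\to\xi{\rm xt}_{\xi}^{0}(-,-)$ together with Lemma \ref{lem:2.4} and Lemma \ref{lem:long-exact-sequence}; and surjectivity of $x^{*}$ becomes the vanishing of the connecting map, which you get by pushing $a_{*}\delta$ into one step of an injective coresolution of $W$ and lifting back via the surjectivity of $v_{*}\colon\mathcal{C}(C,I)\to\mathcal{C}(C,W')$ (forced by $\xi{\rm xt}_{\xi}^{1}(C,W)=0$ from Lemma \ref{lem:2.4}) and the identity $v^{*}\eta=0$. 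Since Lemmas \ref{lem:long-exact-sequence} and \ref{lem:2.4} precede this lemma in the paper, there is no circularity; your argument is shorter and avoids induction entirely, all the finite-dimension bookkeeping being absorbed into Lemma \ref{lem:2.4}, whereas the paper's chase is more elementary and self-contained (it parallels \cite[Lemma 5.7]{HZZ} and does not need the $\xi{\rm xt}$ machinery at all). One point you should make explicit: both of your naturality chases require that the isomorphism $\mathcal{C}(G,M)\cong\xi{\rm xt}_{\xi}^{0}(G,M)$ in Lemma \ref{lem:2.4} is the canonical comparison map $\delta$; this is indeed what the proof of that lemma establishes (it shows $\delta_{1}$ is an isomorphism), but the bare statement of the lemma does not say so, so a one-line remark to that effect would make your proof airtight.
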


\begin{proof} The proof is similar to \cite[Lemma 5.7]{HZZ}, we give its proof for convenience. Since $C\in\mathcal{GP}(\xi)$, there is an $\mathbb{E}$-triangle $\xymatrix{K\ar[r]^g&P\ar[r]^f&C\ar@{-->}[r]^\delta&}$ in $\xi$ with $P\in\mathcal{P}(\xi)$ and $K\in\mathcal{GP}(\xi)$. It suffices to show that  the $\mathbb{E}$-triangle $\xymatrix{K\ar[r]^g&P\ar[r]^f&C\ar@{-->}[r]^\delta&}$ is $\mathcal{C}(-,\widetilde{\mathcal{I}}(\xi))$-exact by \cite[Lemma 4.10(1)]{HZZ} because  any $\mathbb{E}$-triangle in $\xi$ which end in $P$ is split, that is, the sequence of abelian groups $$\xymatrix{(*)&0\ar[r]&\mathcal{C}(C, M)\ar[r]^{\mathcal{C}(f, M)}&\mathcal{C}(P, M)\ar[r]^{\mathcal{C}(g, M)}&\mathcal{C}(K, M)\ar[r]&0}$$ is exact for any $M\in\mathcal{C}$ with $\xi$-${\rm id}M=n$. If $n=0$, then the sequence $(*)$ is exact because $M\in\mathcal{I}(\xi)$.

Assume that the sequence $(*)$ is exact for any $L\in\mathcal{C}$ with $\xi$-${\rm id}L=n-1$. Now we consider the case of $\xi$-${\rm id}M=n$. Suppose that $\xymatrix{M\ar[r]^x&I\ar[r]^y&L\ar@{-->}[r]^\rho&}$ is an $\mathbb{E}$-triangle in $\xi$ with $I\in\mathcal{I}(\xi)$ and $\xi$-${\rm id}L=n-1$. Hence the sequence of abelian groups $$\xymatrix{0\ar[r]&\mathcal{C}(C, L)\ar[r]^{\mathcal{C}(f, L)}&\mathcal{C}(P, L)\ar[r]^{\mathcal{C}(g, L)}&\mathcal{C}(K, L)\ar[r]&0}$$ is exact by induction. Since  the functor $\mathcal{C}(-, -)$ is biadditive functor, we have following commutative diagram
$$\xymatrix{&0\ar@{-->}[d]&0\ar[d]&0\ar[d]\\
0\ar@{-->}[r]&\mathcal{C}(C, M)\ar[r]^{\mathcal{C}(C, x)}\ar[d]_{\mathcal{C}(f, M)}&\mathcal{C}(C, I)\ar[d]^{\mathcal{C}(f, I)}\ar[r]^{\mathcal{C}(C, y)}&\mathcal{C}(C, L)\ar[d]^{\mathcal{C}(f, L)}\ar@{-->}[r]&0\\
0\ar[r]&\mathcal{C}(P, M)\ar[r]^{\mathcal{C}(P, x)}\ar[d]_{\mathcal{C}(g, M)}&\mathcal{C}(P, I)\ar[d]^{\mathcal{C}(g, I)}\ar[r]^{\mathcal{C}(P, y)}&\mathcal{C}(P, L)\ar[d]^{\mathcal{C}(g, L)}\ar[r]&0\\
0\ar@{-->}[r]&\mathcal{C}(K, M)\ar[r]^{\mathcal{C}(K, x)}\ar@{-->}[d]&\mathcal{C}(K,I)\ar[r]^{\mathcal{C}(K, y)}\ar[d]&\mathcal{C}(K,L)\ar@{-->}[r]\ar[d]&0\\
&0&0&0}$$
It is easy to see that the morphism $\mathcal{C}(K, y)$ is epic. Similarly $\mathcal{C}(C, y)$ is epic. Next we claim that $\mathcal{C}(g,M)$ is epic. For any morphism $h: K\rightarrow M$, there exists a morphism $h_1: P\rightarrow I$ such that $h_1g=xh$. It follows from (ET3) that there exists a morphism $h_2$ which gives the following commutative diagram:
 $$\xymatrix{K\ar[r]^g\ar[d]^{h}&P\ar[r]^f\ar[d]^{h_1}&C\ar@{-->}[r]^\delta\ar@{-->}[d]^{h_2}&\\
 M\ar[r]^x&I\ar[r]^y&L\ar@{-->}[r]^{\rho}&}$$
 Since $\mathcal{C}(C, y): \mathcal{C}(C, I)\rightarrow \mathcal{C}(C, L)$ is epic,  $h_2$ factors through $y$. Hence $h$ factors through $g$ by \cite[Corollary 3.5]{NP}. That is, $\mathcal{C}(g, M)$ is epic, which implies
that the sequence of abelian groups $$\xymatrix{0\ar[r]&\mathcal{C}(K, M)\ar[r]^{\mathcal{C}(K, x)}&\mathcal{C}(K, I)\ar[r]^{\mathcal{C}(K, y)}&\mathcal{C}(K, L)\ar[r]&0}$$  is exact by snake lemma. Similarly one can prove that the sequence of abelian groups $$\xymatrix{0\ar[r]&\mathcal{C}(C, M)\ar[r]^{\mathcal{C}(C, x)}&\mathcal{C}(C, I)\ar[r]^{\mathcal{C}(C, y)}&\mathcal{C}(C, L)\ar[r]&0}$$ is exact as $C\in\mathcal{GP}(\xi)$. It is straightforward to prove that the
 sequence $(*)$ is exact by  Lemma $3\times 3$.
\end{proof}

\begin{prop}\label{thm:2.5} Let $\mathcal{C}$ be an extriangulated category.
 \begin{enumerate}
\item[{\rm (1)}] If $M$ is an object in ${\widetilde{\mathcal{I}}(\xi)}$, then $\xi\textrm{-}\mathcal{G}{\rm pd}M =\xi\textrm{-}{\rm pd}M$.
\item[{\rm (2)}] If $M$ is an object in ${\widetilde{\mathcal{P}}(\xi)}$, then $\xi\textrm{-}\mathcal{G}{\rm id}M =\xi\textrm{-}{\rm id}M$.
\end{enumerate}
\end{prop}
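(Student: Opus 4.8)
The plan is to prove (1) and to deduce (2) by dualising. Since $\mathcal{P}(\xi)\subseteq\mathcal{GP}(\xi)$, a $\xi$-projective resolution of length $k$ is in particular a $\xi$-$\mathcal{G}$projective resolution of length $k$, so $\xi\textrm{-}\mathcal{G}{\rm pd}M\leq\xi\textrm{-}{\rm pd}M$ holds for every $M$; if $\xi\textrm{-}\mathcal{G}{\rm pd}M=\infty$ then $\xi\textrm{-}{\rm pd}M=\infty$ too and there is nothing to prove. Thus I may assume $n:=\xi\textrm{-}\mathcal{G}{\rm pd}M<\infty$ and must establish $\xi\textrm{-}{\rm pd}M\leq n$.

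First I would record the easy upgrade from an inequality to an equality, which does not yet use $M\in\widetilde{\mathcal{I}}(\xi)$ but does require $\xi\textrm{-}{\rm pd}M$ to be finite. Suppose $\xi\textrm{-}{\rm pd}M=m<\infty$ with $m>n$. Then $\xi{\rm xt}_{\xi}^{m+1}(M,-)=0$, while $\xi{\rm xt}_{\xi}^{m}(M,N)\neq 0$ for some $N$ (for instance $N$ the $m$-th syzygy of $M$). Choosing an $\mathbb{E}$-triangle $N'\to Q\to N$ in $\xi$ with $Q\in\mathcal{P}(\xi)$ and feeding it into the long exact sequence of Lemma \ref{lem:long-exact-sequence}, the vanishing of $\xi{\rm xt}_{\xi}^{m+1}(M,N')$ forces a surjection $\xi{\rm xt}_{\xi}^{m}(M,Q)\twoheadrightarrow\xi{\rm xt}_{\xi}^{m}(M,N)$, whence $\xi{\rm xt}_{\xi}^{m}(M,Q)\neq 0$ for a $\xi$-projective $Q$; as $m>n$ this contradicts Theorem \ref{prop:2.7}. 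So, once finiteness of $\xi\textrm{-}{\rm pd}M$ is known, $\xi\textrm{-}{\rm pd}M=n$ follows at once.

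Everything therefore reduces to proving $\xi\textrm{-}{\rm pd}M<\infty$, and this is exactly the point at which $M\in\widetilde{\mathcal{I}}(\xi)$ must be exploited. Concretely, I would fix a $\xi$-projective resolution of $M$ with syzygy $\mathbb{E}$-triangles $K_{j+1}\to P_j\to K_j$ (with $K_0=M$); by Lemma \ref{pro5} the $n$-th syzygy $K_n$ lies in $\mathcal{GP}(\xi)$, and it suffices to show $K_n\in\mathcal{P}(\xi)$. For this I would take the defining $\mathbb{E}$-triangle $K_n\to P\to G$ in $\xi$ with $P\in\mathcal{P}(\xi)$ and $G\in\mathcal{GP}(\xi)$ and attempt to split it, since a splitting realises $K_n$ as a direct summand of $P$, hence $\xi$-projective. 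The obstruction is the class of this $\mathbb{E}$-triangle in $\xi{\rm xt}_{\xi}^{1}(G,K_n)$, and the finite $\xi$-injective dimension of $M$ is meant to be brought to bear through Lemmas \ref{lem:2.3} and \ref{lem:2.4}, which govern how the Gorenstein projectives $G$ and $K_n$ pair against objects of finite $\xi$-injective dimension; the intermediate projectives $P_j$ are transparent to $\xi{\rm xt}_{\xi}(G,-)$ by Lemma \ref{lem:2.4} (as $P_j\in\widetilde{\mathcal{P}}(\xi)$), so the relevant computation can be transported along the syzygy triangles back to $M$ itself.

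The step I expect to be the genuine obstacle is precisely this last transport. The difficulty is that the syzygy $K_n$ \emph{need not} have finite $\xi$-injective dimension, so Lemma \ref{lem:2.4} cannot be applied to the pair $(G,K_n)$ directly: the finite $\xi$-injective dimension resides on $M$, not on its syzygies, and must be propagated through the resolution with the correct homological degree. In the module- and triangulated-category settings this is the assertion that finite $\xi$-$\mathcal{G}$projective dimension together with finite $\xi$-injective dimension forces finite $\xi$-projective dimension, and it is the technical heart of the proposition; the argument of the previous paragraph then merely records the resulting equality. Finally, statement (2) is obtained by dualising throughout: replace $\xi$-projective resolutions by $\xi$-injective coresolutions, $\mathcal{P}(\xi),\mathcal{GP}(\xi),\widetilde{\mathcal{I}}(\xi)$ by $\mathcal{I}(\xi),\mathcal{GI}(\xi),\widetilde{\mathcal{P}}(\xi)$, and Theorem \ref{prop:2.7} by its injective counterpart.
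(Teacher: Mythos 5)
Your proposal has a genuine gap, and you name it yourself: you reduce everything to proving $\xi\textrm{-}{\rm pd}M<\infty$ and then never establish that finiteness. This is not a peripheral technicality but the entire content of the proposition --- the ``upgrade from inequality to equality'' in your second paragraph is the easy half. Moreover, the route you sketch for the missing step cannot be completed as stated: to split the $\mathbb{E}$-triangle $K_n\to P\to G$ you must kill its class in $\xi{\rm xt}_{\xi}^{1}(G,K_n)$, and the only vanishing tool available, Lemma \ref{lem:2.4}, requires the second argument to lie in $\widetilde{\mathcal{P}}(\xi)$ or $\widetilde{\mathcal{I}}(\xi)$. But $K_n\in\widetilde{\mathcal{P}}(\xi)$ is essentially the statement being proved, and, as you yourself observe, the finite $\xi$-injective dimension of $M$ does not pass to its syzygies; the argument is therefore circular exactly at the point where it needs to close.

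The paper sidesteps the transport problem by never working at the level of syzygies. It invokes \cite[Proposition 5.9]{HZZ}: if $\xi\textrm{-}\mathcal{G}{\rm pd}M=n<\infty$, there is an $\mathbb{E}$-triangle $M\to L\to G$ in $\xi$ with $G\in\mathcal{GP}(\xi)$ and $\xi\textrm{-}{\rm pd}L\leq n$, i.e.\ a Gorenstein approximation placed on the \emph{other} side of $M$, with the finite-projective-dimension object in the middle. Now the hypothesis is used exactly where it lives, on $M$ itself: since $G\in\mathcal{GP}(\xi)$, Lemma \ref{lem:2.3} says this triangle is $\mathcal{C}(-,\widetilde{\mathcal{I}}(\xi))$-exact, and since $M\in\widetilde{\mathcal{I}}(\xi)$ one obtains the exact sequence
$$0\longrightarrow\mathcal{C}(G,M)\longrightarrow\mathcal{C}(L,M)\longrightarrow\mathcal{C}(M,M)\longrightarrow 0.$$
Hence ${\rm id}_M$ factors through $L$, so $M$ is a direct summand of $L$ and $\xi\textrm{-}{\rm pd}M\leq\xi\textrm{-}{\rm pd}L\leq n$; combined with the trivial inequality this gives the equality at once, and also renders your first reduction unnecessary. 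If you want to repair your write-up, the missing ingredient is precisely this approximation triangle from \cite{HZZ} together with Lemma \ref{lem:2.3}: your splitting idea then works, but applied to $M\to L\to G$ rather than to the syzygy triangle $K_n\to P\to G$.
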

\begin{proof} We only prove (1), the proof of (2) is similar. It is clear that $\xi\textrm{-}\mathcal{G}{\rm pd}M \leq\xi\textrm{-}{\rm pd}M$, and the equality holds if $\xi\textrm{-}\mathcal{G}{\rm pd}M =\infty$. If $\xi\textrm{-}\mathcal{G}{\rm pd}M =n<\infty$, then there exists an $\mathbb{E}$-triangle $\xymatrix@C=1.5em{M\ar[r]&L\ar[r]&G\ar@{-->}[r]&}$ in $\xi$ such that $G\in\mathcal{GP}(\xi)$ and $\xi$-${\rm pd} L\leqslant n$ by \cite[Proposition 5.9]{HZZ}. Note that $M\in{\widetilde{\mathcal{I}}(\xi)}$. Thus, by Lemma \ref{lem:2.3}, we have the following exact sequence $$\xymatrix{0\ar[r]&\mathcal{C}(G, M)\ar[r]&\mathcal{C}(L, M)\ar[r]&\mathcal{C}(M, M)\ar[r]&0.}$$ So $M$ is a direct summand of $L$, and $\xi\textrm{-}{\rm pd}M \leq\xi\textrm{-}{\rm pd}L=n=\xi\textrm{-}\mathcal{G}{\rm pd}M$, as desired.
\end{proof}

We are now in a position to prove the main result of this section.

\begin{thm}\label{thm:3.4}Let $\mathcal{C}$ be an extriangulated category, and let $\mathcal{P}(\xi)$ be a generating subcategory of $\mathcal{C}$ and $\mathcal{I}(\xi)$ be a cogenerating subcategory of $\mathcal{C}$. Consider the following
conditions for any non-negative integer $m$:
\begin{enumerate}
\item[{\rm (1)}] $\sup\{\xi\textrm{-}\mathcal{G}{\rm pd}M \ | \ \textrm{for} \ \textrm{any} \ M\in{\mathcal{C}}\}\leq m$.

\item[{\rm (2)}] $\sup\{\xi\textrm{-}\mathcal{G}{\rm id}M \ | \ \textrm{for} \ \textrm{any} \ M\in{\mathcal{C}}\}\leq m$.

\item[{\rm (3)}]  $\xi\textrm{-}{\rm spli}\mathcal{C}=\xi\textrm{-}{\rm silp}\mathcal{C}\leq m$.
\end{enumerate}
Then ${\rm (1)}\Rightarrow {\rm (3)}$ and ${\rm (2)}\Rightarrow {\rm (3)}$. The converses hold if $\mathcal{C}$ satisfies the following condition:

{\rm \textbf{Condition $(\star)$}}: If  $N\in\mathcal{C}$ and $M\in{\widetilde{\mathcal{P}}(\xi)}$  such that $\xi{\rm xt}_{\xi}^{i}(M,N)=0$ for  any $i\geq1$, then $\mathcal{C}(M,N)\cong\xi{\rm xt}_{\xi}^{0}(M,N)$. Dually, if $N\in\mathcal{C}$  and $M\in{\widetilde{\mathcal{I}}(\xi)}$  such that $\xi{\rm xt}_{\xi}^{i}(N,M)=0$ for any  $i\geq1$, then $\mathcal{C}(N,M)\cong\xi{\rm xt}_{\xi}^{0}(N,M)$.


\end{thm}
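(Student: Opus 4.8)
The plan is to prove the two forward implications directly and then recover the converses, under Condition $(\star)$, by reducing everything to a single vanishing criterion for $\xi$-$\mathcal{G}$projectivity. For $(1)\Rightarrow(3)$ I would argue as follows. Every $I\in\mathcal{I}(\xi)$ lies in $\widetilde{\mathcal{I}}(\xi)$, so Proposition \ref{thm:2.5}(1) gives $\xi\textrm{-}{\rm pd}I=\xi\textrm{-}\mathcal{G}{\rm pd}I\leq m$; taking the supremum yields $\xi\textrm{-}{\rm spli}\mathcal{C}\leq m$. Since $\sup\{\xi\textrm{-}\mathcal{G}{\rm pd}M\}\leq m<\infty$ and $\mathcal{I}(\xi)$ is cogenerating, Lemma \ref{prop:3.2}(2) gives $\xi\textrm{-}{\rm silp}\mathcal{C}<\infty$; as both invariants are finite, Proposition \ref{prop:3.1} forces $\xi\textrm{-}{\rm silp}\mathcal{C}=\xi\textrm{-}{\rm spli}\mathcal{C}\leq m$, which is $(3)$. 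The implication $(2)\Rightarrow(3)$ is dual: Proposition \ref{thm:2.5}(2) gives $\xi\textrm{-}{\rm id}P=\xi\textrm{-}\mathcal{G}{\rm id}P\leq m$ for $P\in\mathcal{P}(\xi)$, so $\xi\textrm{-}{\rm silp}\mathcal{C}\leq m$, and Lemma \ref{prop:3.2}(1) together with Proposition \ref{prop:3.1} finishes. Note that neither forward implication uses Condition $(\star)$.

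For the converse $(3)\Rightarrow(1)$ I would first record the Ext-vanishing that comes for free from $\xi\textrm{-}{\rm silp}\mathcal{C}\leq m$. Computing $\xi{\rm xt}_{\xi}^{\ast}(M,Q)$ from a $\xi$-injective coresolution of the second variable (Definition \ref{df:derived-functors} and the ensuing balancing isomorphism), any $Q\in\mathcal{P}(\xi)$ satisfies $\xi\textrm{-}{\rm id}Q\leq m$, whence $\xi{\rm xt}_{\xi}^{i}(M,Q)=0$ for every $M$ and all $i\geq m+1$. By Theorem \ref{prop:2.7} it then remains only to prove that $M\in\widetilde{\mathcal{GP}}(\xi)$ for every $M$: once this is known, the displayed vanishing is precisely condition $(3)$ of Theorem \ref{prop:2.7} with $n=m$, giving $\xi\textrm{-}\mathcal{G}{\rm pd}M\leq m$. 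To prove $M\in\widetilde{\mathcal{GP}}(\xi)$ I would pass to the $m$-th syzygy $K_{m}$ in a $\xi$-projective resolution of $M$ and show $K_{m}\in\mathcal{GP}(\xi)$; by dimension shifting through Lemma \ref{lem:long-exact-sequence}, the vanishing above yields $\xi{\rm xt}_{\xi}^{i}(K_{m},\mathcal{P}(\xi))=0$ for all $i\geq1$.

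The heart of the matter is therefore the claim that, under $\xi\textrm{-}{\rm spli}\mathcal{C}\leq m$, any $G$ with $\xi{\rm xt}_{\xi}^{i}(G,\mathcal{P}(\xi))=0$ for all $i\geq1$ lies in $\mathcal{GP}(\xi)$. Verifying this means assembling a complete $\xi$-projective resolution with $G$ as a syzygy. The left half is the given $\xi$-projective resolution of $G$, which is automatically $\mathcal{C}(-,\mathcal{P}(\xi))$-exact by the vanishing of $\xi{\rm xt}_{\xi}^{\geq1}(-,\mathcal{P}(\xi))$ and Lemma \ref{lem:long-exact-sequence}. The right half — a $\mathcal{C}(-,\mathcal{P}(\xi))$-exact coresolution $G\to P^{0}\to P^{1}\to\cdots$ with $P^{i}\in\mathcal{P}(\xi)$ — is the main obstacle. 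I would construct it using enough $\xi$-injectives to coresolve $G$, the hypothesis $\xi\textrm{-}{\rm spli}\mathcal{C}\leq m$ (so that each $\xi$-injective lies in $\widetilde{\mathcal{P}}(\xi)$ and can be traded for $\xi$-projectives via the Horseshoe Lemma \ref{lem:horseshoe lemma} and Lemma \ref{lem:long-exact-sequence}), and Condition $(\star)$, which supplies the identifications $\mathcal{C}(-,-)\cong\xi{\rm xt}_{\xi}^{0}(-,-)$ needed to certify that each $\mathbb{E}$-triangle produced is genuinely $\mathcal{C}(-,\mathcal{P}(\xi))$-exact. This is the role played automatically by ${\rm Ext}^{0}={\rm Hom}$ in the module case and by the degree-zero comparison maps in the triangulated case of Ren--Liu. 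Splicing the two halves exhibits $G$ as a syzygy of a complete $\xi$-projective resolution, so $G\in\mathcal{GP}(\xi)$, and hence $\xi\textrm{-}\mathcal{G}{\rm pd}M\leq m$.

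Finally, $(3)\Rightarrow(2)$ is handled by the dual construction: one passes to the $m$-th cosyzygy in a $\xi$-injective coresolution of $M$, uses $\xi\textrm{-}{\rm spli}\mathcal{C}\leq m$ to get $\xi{\rm xt}_{\xi}^{i}(\mathcal{I}(\xi),-)=0$ in degrees $\geq m+1$, builds the missing half of a complete $\xi$-injective coresolution from enough $\xi$-projectives, $\xi\textrm{-}{\rm silp}\mathcal{C}\leq m$, and the dual half of Condition $(\star)$, and concludes with the $\xi$-$\mathcal{G}$injective analogue of Theorem \ref{prop:2.7}. I expect the coresolution-building step — that is, showing the vanishing criterion of the previous paragraph actually characterizes $\mathcal{GP}(\xi)$ (respectively $\mathcal{GI}(\xi)$) when the relevant invariant is finite — to be the only genuinely technical point; the remainder is bookkeeping with the long exact sequences of Lemma \ref{lem:long-exact-sequence} and the identifications furnished by Lemmas \ref{lem:2.4}, \ref{lem:2.6} and \ref{lem:2.3}.
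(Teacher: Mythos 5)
Your argument for $(1)\Rightarrow(3)$ and $(2)\Rightarrow(3)$ is correct and coincides with the paper's (the paper merely cites Propositions \ref{prop:3.1}, \ref{thm:2.5} and Lemma \ref{prop:3.2}; you have written out the same deduction, including the observation that Condition $(\star)$ plays no role there). Your plan for the converses also uses the right ingredients --- Horseshoe-type trading between the two kinds of resolutions, the finiteness of $\xi\textrm{-}{\rm spli}\mathcal{C}$ and $\xi\textrm{-}{\rm silp}\mathcal{C}$, and Condition $(\star)$ --- and is essentially the mirror image of the paper's proof: the paper writes out the Gorenstein \emph{injective} bound, showing via the dual Horseshoe Lemma that the $m$-th cosyzygy $L_{0}^{m}$ of $M$ is a cosyzygy of a complete $\xi$-injective coresolution, hence $\xi\textrm{-}\mathcal{G}{\rm id}M\leq m$ in one pass, with no appeal to Theorem \ref{prop:2.7}. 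However, two of your steps are overclaimed as written.

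First, the left half of your complete resolution is not ``automatically'' $\mathcal{C}(-,\mathcal{P}(\xi))$-exact. Lemma \ref{lem:long-exact-sequence} is a statement about $\xi{\rm xt}_{\xi}$, and in a general extriangulated category the natural map $\mathcal{C}(A,Q)\to\xi{\rm xt}_{\xi}^{0}(A,Q)$ need not be an isomorphism for $Q\in\mathcal{P}(\xi)$; without that identification, vanishing of $\xi{\rm xt}_{\xi}^{\geq1}$ does not even give injectivity of $\mathcal{C}(K_{j},Q)\to\mathcal{C}(P_{j},Q)$ for the triangles $K_{j+1}\to P_{j}\to K_{j}$ of the resolution (think of the triangulated case). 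This is precisely what Condition $(\star)$ is for: since $\xi\textrm{-}{\rm silp}\mathcal{C}\leq m$ puts $\mathcal{P}(\xi)\subseteq\widetilde{\mathcal{I}}(\xi)$, the dual half of Condition $(\star)$ applied to the syzygies (whose higher $\xi{\rm xt}_{\xi}$ against $Q$ vanish by dimension shifting) yields $\mathcal{C}(K_{j},Q)\cong\xi{\rm xt}_{\xi}^{0}(K_{j},Q)$, and only then does the long exact sequence give Hom-exactness. So Condition $(\star)$ must be invoked on both halves, not only the right one.

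Second, and more seriously, the ``trading'' does not prove your key claim as stated. Coresolving $G$ by $\xi$-injectives $G=Y^{0}\to I^{0}\to I^{1}\to\cdots$ and applying the Horseshoe Lemma \ref{lem:horseshoe lemma} to the triangles $Y^{j}\to I^{j}\to Y^{j+1}$ produces $\mathbb{E}$-triangles $S_{j}\to T_{j}\to S_{j+1}$ between the $m$-th \emph{syzygies} of these objects, with $T_{j}\in\mathcal{P}(\xi)$ because $\xi\textrm{-}{\rm pd}I^{j}\leq m$; splicing therefore exhibits $S_{0}$, the $m$-th syzygy of $G$, as a syzygy of a complete $\xi$-projective resolution --- not $G$ itself. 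So the construction delivers $\xi\textrm{-}\mathcal{G}{\rm pd}G\leq m$, not $G\in\mathcal{GP}(\xi)$. Your overall proof survives, because your reduction only needs $M\in\widetilde{\mathcal{GP}}(\xi)$, after which Theorem \ref{prop:2.7} together with the vanishing in degrees $\geq m+1$ gives $\xi\textrm{-}\mathcal{G}{\rm pd}M\leq m$; but the intermediate claim must be weakened accordingly, and the logic reordered. If you want the sharp statement in one pass (as the paper does, dually), run the Horseshoe construction on $M$ itself: take $\xi$-projective resolutions of the cosyzygies of an $\xi$-injective coresolution of $M$; then the $m$-th syzygy of $M$ is directly a syzygy of a complete resolution and Theorem \ref{prop:2.7} is not needed. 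Finally, note that the Gorenstein injective analogue of Theorem \ref{prop:2.7}, which your $(3)\Rightarrow(2)$ invokes, is nowhere stated in the paper; it is true by duality, but you should say so rather than cite it tacitly.
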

\begin{proof} $(1)\Rightarrow(3)$ and $(2)\Rightarrow(3)$  hold by Proposition \ref{prop:3.1}, Proposition \ref{thm:2.5} and Lemma \ref{prop:3.2}.

$(3)\Rightarrow(1)$. Assume that $\xi\textrm{-}{\rm spli}\mathcal{C}=\xi\textrm{-}{\rm silp}\mathcal{C}=m<\infty$. For any $M\in{\mathcal{C}}$, we consider the following $\xi$-projective resolution of $M$
$$\xymatrix@C=2em{\cdots\ar[r]&P_n\ar[r]&\cdots \ar[r]&P_1\ar[r]&P_0\ar[r]&M.}$$
Thus there exists an $\mathbb{E}$-triangle
$$\xymatrix@C=2em{K_{n+1}\ar[r]&P_n\ar[r]&K_n\ar@{-->}[r]&}$$ in $\xi$ with $K_0=M$ for any integer $n\geq0$. Consider the $\xi$-injective coresolution of $K_{i}$
$$\xymatrix@C=2em{K_{i}\ar[r]&I_{i}^{0}\ar[r]&I_{i}^{1}\ar[r]&\cdots}$$
for any integer $i\geq0$. Thus we have the following commutative diagram which is the dual of Lemma \ref{lem:horseshoe lemma}
 $$
\xymatrix{
  K_{n+1} \ar[d]\ar[r] & P_{n} \ar[d]\ar[r] & K_{n}\ar[d] \ar[r]  & \\
  I_{n+1}^{0} \ar[d]\ar[r] & I_{n+1}^{0}\oplus I_{n}^{0} \ar[d]\ar[r] & I_{n}^{0}\ar[d] \ar[r]  & \\
  I_{n+1}^{1} \ar[d]\ar[r] & I_{n+1}^{1}\oplus I_{n}^{1} \ar[d]\ar[r] & I_{n}^{1}\ar[d] \ar[r]  & \\
  \vdots \ar[d]& \vdots \ar[d] &\vdots\ar[d] &\\
   I_{n+1}^{m-1} \ar[d]\ar[r] & I_{n+1}^{m-1}\oplus I_{n}^{m-1} \ar[d]\ar[r] & I_{n}^{m-1}\ar[d] \ar[r]  & \\
   L_{n+1}^{m} \ar[r] & I_{n}^{m}\ar[r] & L_{n}^{m} \ar[r]&. }
  $$
  Note that $\xi\textrm{-}\textrm{id}P_{n}\leq m$ and $I_{n+1}^{0}\oplus I_{n}^{0},\ \cdots, \ I_{n+1}^{m-1}\oplus I_{n}^{m-1}\in{\mathcal{I}(\xi)}$. It follows that
  $I_{n}^{m}\in{\mathcal{I}(\xi)}$.
  For any $I\in{\mathcal{I}(\xi)}$,  since $\xi\textrm{-}\textrm{pd}I\leq m$ by hypothesis, we get that $\xi{\rm xt}_{\xi}^{s}(I,L_{j}^{m})\cong \xi{\rm xt}_{\xi}^{m+s}(I,K_{j})=0$ for any integers $j\geq0$ and $s\geq 1$.
\medskip

Note that $I\in\widetilde{\mathcal{P}}(\xi)$ and $\xi{\rm xt}_{\xi}^{s}(I,L_{j}^{m})=0$ for any $j\geq0$ and $s\geq1$ by the above proof. It follows from the hypothesis that $\mathcal{C}(I,L_{j}^{m})\cong\xi{\rm xt}_{\xi}^{0}(I,L_{j}^{m})$ for any integer $j\geq0$. For the $\mathbb{E}$-triangle
$\xymatrix@C=2em{L_{j+1}^{m}\ar[r]&I_n^{m}\ar[r]&L_{j}^{m}\ar@{-->}[r]&}$ in $\xi$ with $j\geq0$, we have the following commutative diagram
$$\xymatrix@C=3em{0\ar@{-->}[r]&{\mathcal{C}}(I,L_{j+1}^{m})\ar[r]\ar[d]_{\cong}&{\mathcal{C}}(I,I_{j}^{m})\ar[r]\ar[d]^{\cong}&{\mathcal{C}}(I,L_{j}^{m})
\ar[d]^{\cong}\ar@{-->}[r]&0\\
0\ar[r]&{\rm{\xi}xt}_{\xi}^0(I,L_{j+1}^{m})\ar[r]&{\rm {\xi}xt}_{\xi}^0(I,I_{j+1}^{m})\ar[r]&{\rm {\xi}xt}_{\xi}^0(I,L_{j}^{m})\ar[r]&0.}$$
Therefore, there exists a $\xi$-exact complex $$\xymatrix{\cdots\ar[r]&I_{1}^{m}\ar[r]&I_{0}^{m}\ar[r]&L_{0}^{m},}$$
which is $\mathcal{C}(\mathcal{I}(\xi),-)$-exact.
Note that there exists an $\mathbb{E}$-triangle
$\xymatrix@C=2em{L_{0}^{m}\ar[r]&E^{0}\ar[r]&Y^1\ar@{-->}[r]&}$ in $\xi$ with $E^{0}\in{\mathcal{I}(\xi)}$. By the above proof, we have that $\xi{\rm xt}_{\xi}^{i}(I,L_{0}^{m})=0$ for any $i\geq1$. It follows from Lemma \ref{lem:long-exact-sequence} that $\xi{\rm xt}_{\xi}^{i}(I,Y^{1})=0$ for any $i\geq1$. Hence $\mathcal{C}(I,Y^{1})\cong\xi{\rm xt}_{\xi}^{0}(I,Y^{1})$ by hypothesis.
By the foregoing proof, the $\mathbb{E}$-triangle
$\xymatrix@C=2em{L_{0}^{m}\ar[r]&E^{0}\ar[r]&Y^1\ar@{-->}[r]&}$ is $\mathcal{C}(\mathcal{I}(\xi),-)$-exact. By proceeding in this manner, we get a $\xi$-exact complex $$\xymatrix{L_{0}^{m}\ar[r]& E^{0}\ar[r]&E^{1}\ar[r]&\cdots,}$$ which is $\mathcal{C}(\mathcal{I}(\xi),-)$-exact. So $L_{0}^{m}\in{\mathcal{GI}(\xi)}$ and $\xi\textrm{-}\mathcal{G}{\rm id}M\leq m$, as desired.

$(3)\Rightarrow(2)$. The proof is similar to that of $(3)\Rightarrow(1)$.
\end{proof}

As a consequence of Theorem \ref{thm:3.4}, we have the following corollary.
\begin{cor}\label{corollary:4.5} Let $\mathcal{C}$ be an extriangulated category satisfying Condition $(\star)$. If $\mathcal{P}(\xi)$ is a generating subcategory of $\mathcal{C}$ and $\mathcal{I}(\xi)$ is a cogenerating subcategory of $\mathcal{C}$, then we have the following equality£»$$\sup\{\xi\textrm{-}\mathcal{G}{\rm pd}M \ | \ \textrm{for} \ \textrm{any} \ M\in{\mathcal{C}}\}=\sup\{\xi\textrm{-}\mathcal{G}{\rm id}M \ | \ \textrm{for} \ \textrm{any} \ M\in{\mathcal{C}}\}.$$
\end{cor}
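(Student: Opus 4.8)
The plan is to read the equality straight off Theorem \ref{thm:3.4}, which under the present hypotheses upgrades its implications to equivalences. First I would observe that, since $\mathcal{C}$ satisfies Condition $(\star)$ while $\mathcal{P}(\xi)$ is generating and $\mathcal{I}(\xi)$ is cogenerating, Theorem \ref{thm:3.4} gives, for every non-negative integer $m$, both $(1)\Leftrightarrow(3)$ and $(2)\Leftrightarrow(3)$, and therefore $(1)\Leftrightarrow(2)$. Writing $\alpha=\sup\{\xi\textrm{-}\mathcal{G}{\rm pd}M \mid M\in\mathcal{C}\}$ and $\beta=\sup\{\xi\textrm{-}\mathcal{G}{\rm id}M \mid M\in\mathcal{C}\}$, this precisely says that $\alpha\leq m$ if and only if $\beta\leq m$, for each $m\in\mathbb{N}_0$.

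Next I would convert this family of level-set equivalences into the asserted equality. Both $\alpha$ and $\beta$ take values in $\mathbb{N}_0\cup\{\infty\}$, and each is recovered from its sub-level sets via $\alpha=\inf\{m\in\mathbb{N}_0 \mid \alpha\leq m\}$, with the convention $\inf\emptyset=\infty$, and likewise for $\beta$. Since the two sub-level sets coincide by the previous step, their infima agree, whence $\alpha=\beta$. For clarity I would separate the two cases. If $\alpha<\infty$, then taking $m=\alpha$ yields $\alpha\leq m$, hence $\beta\leq m=\alpha$; the symmetric choice $m=\beta$ (now also finite) yields $\alpha\leq\beta$, so $\alpha=\beta$. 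If instead $\alpha=\infty$, then $\alpha\leq m$ fails for every $m\in\mathbb{N}_0$, so $\beta\leq m$ fails for every such $m$, forcing $\beta=\infty$; the symmetric argument disposes of $\beta=\infty$. In all cases $\alpha=\beta$, which is exactly the claimed equality.

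Since Theorem \ref{thm:3.4} already carries the entire homological content, I do not expect any genuine obstacle in this deduction; the only point demanding a little care is the bookkeeping in the case where one (and hence both) of the suprema is infinite, where the equality must be argued through the failure of every finite bound rather than by substituting a finite value of $m$.
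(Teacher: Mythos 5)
Your proposal is correct and is essentially the paper's own argument: the corollary is presented there as an immediate consequence of Theorem \ref{thm:3.4}, whose implications become equivalences $(1)\Leftrightarrow(2)$ for every $m$ under Condition $(\star)$ with $\mathcal{P}(\xi)$ generating and $\mathcal{I}(\xi)$ cogenerating. Your careful conversion of the level-set equivalences into the equality of suprema, including the case where both are infinite, is exactly the (implicit) bookkeeping the paper leaves to the reader.
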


Let $\mathcal{C}=(\mathcal{C}, \mathbb{E}, \mathfrak{s})$ be an extriangulated category, and let $\mathcal{U}$, $\mathcal{V}$ $\subseteq$ $\mathcal{C}$ be a pair of full additive subcategories, closed
under isomorphisms and direct summands. Recall from \cite[Definitions 4.1 and 4.12]{NP} that the pair ($\mathcal{U}$, $\mathcal{V}$) is called a cotorsion
pair on $\mathcal{C}$ if it satisfies the following conditions:

(1) $\mathbb{E}(\mathcal{U}, \mathcal{V})=0$;

(2) For any $C \in{\mathcal{C}}$, there exists a conflation $V^{C}\rightarrow U^{C}\rightarrow C$ satisfying
$U^{C}\in{\mathcal{U}}$ and $V^{C}\in{\mathcal{V}}$;

(3) For any $C \in{\mathcal{C}}$ , there exists a conflation $C\rightarrow V_{C} \rightarrow U_{C}$ satisfying
$U_{C}\in{\mathcal{U}}$ and $V_{C}\in{\mathcal{V}}$.

Moreover, if $(\mathcal{X},\mathcal{Y})$ and $(\mathcal{Y},\mathcal{Z})$ are cotorsion pairs on $\mathcal{C}$, then the triple $(\mathcal{X},\mathcal{Y},\mathcal{Z})$ is called the \emph{cotorsion triple} on $\mathcal{C}$. It follows from \cite[Theorem 3.2]{HZZ} that: for any proper class $\xi$ of $ \mathbb{E}$-triangles, $(\mathcal{C}, \mathbb{E}_\xi, \mathfrak{s}_\xi)$ is an extriangulated category, where  $\mathbb{E}_\xi:=\mathbb{E}|_\xi$ and $\mathfrak{s}_\xi:=\mathfrak{s}|_{\mathbb{E}_\xi}$. Next we have the following corollary.

\begin{cor}\label{corollary:4.6}  Assume that $\mathcal{C}$ is an extriangulated category satisfying Condition $(\star)$ and
$$\mathcal{P}^{\leqslant n}(\xi)=
\{M\in\mathcal{C} \ | \ \xi\textrm{-}{\rm pd}M\leqslant n\}.$$ Then $\sup\{\xi\textrm{-}\mathcal{G}{\rm pd}M \ | \ \textrm{for} \ \textrm{any} \ M\in{\mathcal{C}}\}\leq n$ if and only if $(\mathcal{GP}(\xi), \mathcal{P}^{\leqslant n}(\xi), \mathcal{GI}(\xi))$ is a cotorsion triple.
\end{cor}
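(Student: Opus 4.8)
The plan is to reduce everything to Theorem \ref{thm:3.4} together with the two invariants $\xi\textrm{-}{\rm spli}\mathcal{C}$ and $\xi\textrm{-}{\rm silp}\mathcal{C}$, rather than manipulating Gorenstein dimensions by hand. Throughout I would use the natural identification $\mathbb{E}_{\xi}(A,B)\cong\xi{\rm xt}_{\xi}^{1}(A,B)$, so that the orthogonality axiom of a cotorsion pair on $(\mathcal{C},\mathbb{E}_{\xi},\mathfrak{s}_{\xi})$ is the vanishing of $\xi{\rm xt}_{\xi}^{1}$, while the remaining two axioms ask for $\mathbb{E}$-triangles in $\xi$ realizing approximations. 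Note that $\mathcal{GP}(\xi)$, $\mathcal{GI}(\xi)$ and $\mathcal{P}^{\leqslant n}(\xi)$ are full additive subcategories closed under isomorphisms and direct summands, which is the standing requirement for the classes of a cotorsion pair.

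\textbf{Forward direction.} Assuming $\sup\{\xi\textrm{-}\mathcal{G}{\rm pd}M\}\leqslant n$, I would first feed this into Theorem \ref{thm:3.4}: its implications $(1)\Rightarrow(2)$ and $(1)\Rightarrow(3)$ give $\sup\{\xi\textrm{-}\mathcal{G}{\rm id}M\}\leqslant n$ and $\xi\textrm{-}{\rm spli}\mathcal{C}=\xi\textrm{-}{\rm silp}\mathcal{C}\leqslant n$. The bound $\xi\textrm{-}{\rm silp}\mathcal{C}\leqslant n$ says every $P\in\mathcal{P}(\xi)$ has $\xi\textrm{-}{\rm id}P\leqslant n$, and dually every $I\in\mathcal{I}(\xi)$ has $\xi\textrm{-}{\rm pd}I\leqslant n$. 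A downward dimension-shifting argument along a finite $\xi$-projective resolution, using the long exact sequences of Lemma \ref{lem:long-exact-sequence} to propagate $\xi\textrm{-}{\rm id}K_{i}\leqslant\max\{\xi\textrm{-}{\rm id}P_{i},\xi\textrm{-}{\rm id}K_{i+1}-1\}$, then upgrades this to the key identity $\mathcal{P}^{\leqslant n}(\xi)=\{M\mid\xi\textrm{-}{\rm id}M\leqslant n\}$. Granting it, I verify the two pairs. Orthogonality $\xi{\rm xt}_{\xi}^{1}(\mathcal{GP}(\xi),\mathcal{P}^{\leqslant n}(\xi))=0$ and $\xi{\rm xt}_{\xi}^{1}(\mathcal{P}^{\leqslant n}(\xi),\mathcal{GI}(\xi))=0$ are immediate from Lemma \ref{lem:2.4} and its dual, since $\mathcal{P}^{\leqslant n}(\xi)\subseteq\widetilde{\mathcal{P}}(\xi)$. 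For the approximations of $(\mathcal{GP}(\xi),\mathcal{P}^{\leqslant n}(\xi))$, axiom (3) is exactly \cite[Proposition 5.9]{HZZ} (a conflation $C\to L\to G$ with $G\in\mathcal{GP}(\xi)$ and $\xi\textrm{-}{\rm pd}L\leqslant n$), while axiom (2) is the $\mathcal{GP}(\xi)$-precover $V^{C}\to U^{C}\to C$ with $U^{C}\in\mathcal{GP}(\xi)$ and $\xi\textrm{-}{\rm pd}V^{C}\leqslant n-1$, built by induction on $\xi\textrm{-}\mathcal{G}{\rm pd}C$ from Lemma \ref{pro5}, the $3\times3$-type Lemma \ref{lem1}, and the closure of $\mathcal{GP}(\xi)$ under extensions (\cite[Theorem 4.16]{HZZ}). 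The pair $(\mathcal{P}^{\leqslant n}(\xi),\mathcal{GI}(\xi))$ is handled by the dual statements, where the identity $\mathcal{P}^{\leqslant n}(\xi)=\{M\mid\xi\textrm{-}{\rm id}M\leqslant n\}$ is precisely what forces the dual approximations (which naturally produce objects of finite $\xi$-injective dimension) to land in $\mathcal{P}^{\leqslant n}(\xi)$.

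\textbf{Converse direction.} Given the cotorsion triple, applying axiom (2) of $(\mathcal{GP}(\xi),\mathcal{P}^{\leqslant n}(\xi))$ to an arbitrary $C$ yields a conflation $V^{C}\to U^{C}\to C$ with $U^{C}\in\mathcal{GP}(\xi)$ and $V^{C}\in\mathcal{P}^{\leqslant n}(\xi)$; by the behaviour of $\xi\textrm{-}\mathcal{G}{\rm pd}$ along an $\mathbb{E}$-triangle this gives only $\xi\textrm{-}\mathcal{G}{\rm pd}C\leqslant n+1$, hence $\sup\{\xi\textrm{-}\mathcal{G}{\rm pd}M\}\leqslant n+1<\infty$, so Lemma \ref{prop:3.2}(2) yields $\xi\textrm{-}{\rm silp}\mathcal{C}<\infty$. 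Next, for $I\in\mathcal{I}(\xi)$ I would apply axiom (3) of $(\mathcal{GP}(\xi),\mathcal{P}^{\leqslant n}(\xi))$ to obtain a conflation $I\to V_{I}\to U_{I}$ with $V_{I}\in\mathcal{P}^{\leqslant n}(\xi)$ and $U_{I}\in\mathcal{GP}(\xi)$; since $I$ is $\xi$-injective this conflation lies in $\mathbb{E}_{\xi}(U_{I},I)=0$, so it splits and $I$ is a direct summand of $V_{I}$, giving $\xi\textrm{-}{\rm pd}I\leqslant n$ and therefore $\xi\textrm{-}{\rm spli}\mathcal{C}\leqslant n$. Now Proposition \ref{prop:3.1} forces $\xi\textrm{-}{\rm silp}\mathcal{C}=\xi\textrm{-}{\rm spli}\mathcal{C}\leqslant n$, i.e.\ condition (3) of Theorem \ref{thm:3.4} holds with $m=n$, and its implication $(3)\Rightarrow(1)$ delivers $\sup\{\xi\textrm{-}\mathcal{G}{\rm pd}M\}\leqslant n$.

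The main obstacle is exactly the off-by-one phenomenon in the converse: the Gorenstein-projective precover supplied by the cotorsion pair has kernel only in $\mathcal{P}^{\leqslant n}(\xi)$ rather than $\mathcal{P}^{\leqslant n-1}(\xi)$, so it cannot by itself pin $\xi\textrm{-}\mathcal{G}{\rm pd}C$ down to $n$. The device that rescues the argument is to extract only \emph{finiteness} of the global dimension from that conflation, combine it with the splitting trick that bounds $\xi\textrm{-}{\rm spli}\mathcal{C}$ sharply by $n$, and then let Proposition \ref{prop:3.1} and Theorem \ref{thm:3.4} perform the final sharpening. On the forward side the comparatively delicate point is establishing $\mathcal{P}^{\leqslant n}(\xi)=\{M\mid\xi\textrm{-}{\rm id}M\leqslant n\}$, since this is what allows the single class $\mathcal{P}^{\leqslant n}(\xi)$ to serve simultaneously as the right half of the first cotorsion pair and the left half of the second.
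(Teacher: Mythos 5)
Your proposal is correct, but it follows a genuinely different route from the paper's. The paper's own proof is essentially a reduction to \cite[Theorem 5.4]{HZZ}, used as a black box in both directions: that theorem already gives the equivalence between $\sup\{\xi\textrm{-}\mathcal{G}{\rm pd}M \mid M\in\mathcal{C}\}\leqslant n$ and $(\mathcal{GP}(\xi),\mathcal{P}^{\leqslant n}(\xi))$ being a cotorsion pair in $(\mathcal{C},\mathbb{E}_{\xi},\mathfrak{s}_{\xi})$, so the ``if'' part is one line, while the ``only if'' part combines it with Corollary \ref{corollary:4.5} (to get $\sup\{\xi\textrm{-}\mathcal{G}{\rm id}M\}\leqslant n$), the dual of \cite[Theorem 5.4]{HZZ} (to get the pair $(\mathcal{I}^{\leqslant n}(\xi),\mathcal{GI}(\xi))$), and Proposition \ref{thm:2.5} (to identify $\mathcal{P}^{\leqslant n}(\xi)=\mathcal{I}^{\leqslant n}(\xi)$). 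You never invoke \cite[Theorem 5.4]{HZZ}: you rebuild the two cotorsion pairs by hand (orthogonality from Lemma \ref{lem:2.4} and the identification $\mathbb{E}_{\xi}\cong\xi{\rm xt}^{1}_{\xi}$; the approximation axioms from \cite[Proposition 5.9]{HZZ} plus an Auslander--Buchweitz-type induction using the dual of Lemma \ref{lem1} and \cite[Theorem 4.16]{HZZ}), and you obtain $\mathcal{P}^{\leqslant n}(\xi)=\{M\mid\xi\textrm{-}{\rm id}M\leqslant n\}$ by dimension shifting from $\xi\textrm{-}{\rm spli}\mathcal{C}=\xi\textrm{-}{\rm silp}\mathcal{C}\leqslant n$ rather than from Proposition \ref{thm:2.5}. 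The most interesting divergence is your converse: you correctly diagnose the off-by-one obstruction (the $\mathcal{GP}(\xi)$-precover alone only yields $\xi\textrm{-}\mathcal{G}{\rm pd}C\leqslant n+1$) and bypass it by extracting mere finiteness (hence $\xi\textrm{-}{\rm silp}\mathcal{C}<\infty$ by Lemma \ref{prop:3.2}), getting the sharp bound $\xi\textrm{-}{\rm spli}\mathcal{C}\leqslant n$ from the splitting of the conflations $I\to V_{I}\to U_{I}$ with $I\in\mathcal{I}(\xi)$, and then letting Proposition \ref{prop:3.1} and Theorem \ref{thm:3.4}(3)$\Rightarrow$(1) perform the sharpening; the paper never confronts this issue because \cite[Theorem 5.4]{HZZ} absorbs it. The trade-offs: the paper's citation makes its ``if'' direction independent of Condition $(\star)$ and of the (co)generating hypotheses, whereas your converse runs through Theorem \ref{thm:3.4} and Lemma \ref{prop:3.2} and therefore needs $\mathcal{P}(\xi)$ generating, $\mathcal{I}(\xi)$ cogenerating and Condition $(\star)$ in that direction as well (the forward direction needs these in both proofs, via Corollary \ref{corollary:4.5} resp.\ Theorem \ref{thm:3.4}, even though the corollary's statement omits the (co)generating assumptions); what your route buys is a proof self-contained within this paper's machinery. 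One compression worth expanding: your axiom (2) construction is cleanest not as a fresh induction but as a single pushout --- take an $\mathbb{E}$-triangle $K_{1}\to P_{0}\to C$ in $\xi$ with $\xi\textrm{-}\mathcal{G}{\rm pd}K_{1}\leqslant n-1$ (Lemma \ref{pro5}), apply \cite[Proposition 5.9]{HZZ} to $K_{1}$ to get $K_{1}\to L\to G$ with $\xi\textrm{-}{\rm pd}L\leqslant n-1$, and push out; the dual of Lemma \ref{lem1} yields $\mathbb{E}$-triangles $P_{0}\to D\to G$ and $L\to D\to C$, and \cite[Theorem 4.16]{HZZ} puts $D$ in $\mathcal{GP}(\xi)$.
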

\begin{proof} The ``if" part is straightforward by noting that $(\mathcal{GP}(\xi), \mathcal{P}^{\leqslant n}(\xi))$  is a cotorsion pair in $(\mathcal{C}, \mathbb{E}_\xi, \mathfrak{s}_\xi)$. For the ``only if" part, we assume that $\sup\{\xi\textrm{-}\mathcal{G}{\rm pd}M \ | \ \textrm{for} \ \textrm{any} \ M\in{\mathcal{C}}\}\leq n$. It follows from  \cite[Theorem 5.4]{HZZ} that $(\mathcal{GP}(\xi), \mathcal{P}^{\leqslant n}(\xi))$ is a cotorsion pair in $(\mathcal{C}, \mathbb{E}_\xi, \mathfrak{s}_\xi)$. Note that $\sup\{\xi\textrm{-}\mathcal{G}{\rm id}M \ | \ \textrm{for} \ \textrm{any} \ M\in{\mathcal{C}}\}\leq n$ by Corollary \ref{corollary:4.5}.  Thus one can get that $(\mathcal{I}^{\leqslant n}(\xi), \mathcal{GI}(\xi))$ is a cotorsion pair in $(\mathcal{C}, \mathbb{E}_\xi, \mathfrak{s}_\xi)$, where $\mathcal{I}^{\leqslant n}(\xi)$=
$\{M\in\mathcal{C} \ | \ \xi\textrm{-}{\rm id}M\leqslant n\}$. Next we claim that $\mathcal{P}^{\leqslant n}(\xi)=\mathcal{I}^{\leqslant n}(\xi)$. Let $M$ be an object in $\mathcal{I}^{\leqslant n}(\xi)$. Then $\xi\textrm{-}\mathcal{G}{\rm pd}M =\xi\textrm{-}{\rm pd}M$ by Theorem \ref{thm:2.5}. Thus $\xi\textrm{-}{\rm pd}M\leqslant n$, and hence $M\in{\mathcal{P}^{\leqslant n}(\xi)}$. Similarly, we can prove that $\mathcal{P}^{\leqslant n}(\xi)\subseteq \mathcal{I}^{\leqslant n}(\xi)$. This completes the proof.
\end{proof}

\begin{Ex}\label{Ex:3.12}
\emph{(1)} Assume that $(\mathcal{C}, \mathbb{E}, \mathfrak{s})$ is an exact category and $\xi$ is a class of exact sequences which is closed under isomorphisms. One can check that Condition $(\star)$ in Theorem \ref{thm:3.4} is automatically satisfied.

\emph{(2)} If $\mathcal{C}$ is a triangulated category and the class $\xi$ of triangles is closed under
isomorphisms and suspension \emph{(see \cite[Section 2.2]{Bel1} and \cite[Remark 3.4(3)]{HZZ})}, then Condition $(\star)$ in Theorem \ref{thm:3.4} is also satisfied.
\end{Ex}
\begin{proof}We only need to prove (2). Let $\mathcal{P}^{\leqslant n}(\xi)=\{M\in\mathcal{C} \ | \ \xi$-${\rm pd}M\leqslant n\}$.
Assume that $N$ is an object in $\mathcal{C}$ such that $\xi{\rm xt}_{\xi}^{i}(M,N)=0$ for $M\in{\mathcal{P}^{\leqslant n}(\xi)}$ and any $i\geq1$.
We proceed by induction on $n$. If $n=0$, then it is easy to check that $\mathcal{C}(M,N)\cong\xi{\rm xt}_{\xi}^{0}(M,N)$ for all $M\in{{\mathcal{P}(\xi)}}$. We suppose that $\mathcal{C}(M,N)\cong\xi{\rm xt}_{\xi}^{0}(M,N)$ provided that $\xi{\rm xt}_{\xi}^{i}(M,N)=0$ for all $M\in{{\mathcal{P}^{\leqslant n-1}(\xi)}}$ and any $i\geq1$. Let $M$ be an object in $\mathcal{C}$ with $\xi$-${\rm pd}M \leq n$ and  $\xi{\rm xt}_{\xi}^{i}(M,N)=0$ for any $i\geq1$. Then there exists a triangle
$\xymatrix@C=2em{K\ar[r]^{f}&P\ar[r]^{g}&M\ar[r]^{h}&\Sigma K}$ in $\xi$ with $P\in{\mathcal{P}(\xi)}$ and $K\in{\mathcal{P}^{\leqslant n-1}(\xi)}$. Consider the following commutative diagram:
$$\xymatrix@C=3em{0\ar@{-->}[r]&{\mathcal{C}}(M,N)\ar[r]^{\mathcal{C}(g,N)}\ar[d]_{\delta_1}&{\mathcal{C}}(P,N)\ar[r]^{\mathcal{C}(f,N)}\ar[d]^{\delta_2}&{\mathcal{C}}(K,N)
\ar[d]^{\delta_3}\ar@{-->}[r]&0\\
0\ar[r]&{\rm{\xi}xt}_{\xi}^0(M,N)\ar[r]&{\rm {\xi}xt}_{\xi}^0(P,N)\ar[r]&{\rm {\xi}xt}_{\xi}^0(K,N)\ar[r]&0.}$$
Since both $\delta_2$ and $\delta_3$ are isomorphisms, $\mathcal{C}(f,N):\mathcal{C}(P,N)\rightarrow \mathcal{C}(K,N)$ is an epimorphism.
Thus $\mathcal{C}(\Sigma^{-1}g,N):\mathcal{C}(\Sigma^{-1}M,N)\rightarrow \mathcal{C}(\Sigma^{-1}P,N)$ is a monomorphism, and hence $$\mathcal{C}(g,\Sigma N):\mathcal{C}(M,\Sigma N)\rightarrow \mathcal{C}(P,\Sigma N)$$ is a monomorphism.

Note that  $\xi$ is closed under suspension. It follows that the triangle $$\xymatrix@C=2em{\Sigma K\ar[r]^{- \Sigma f}& \Sigma P\ar[r]^{-\Sigma g}&\Sigma M\ar[r]^{-\Sigma h}&\Sigma^{2} K}$$ is in $\xi$ with $\Sigma P\in{\mathcal{P}(\xi)}$ and $\Sigma K\in{\mathcal{P}^{\leqslant n-1}(\xi)}$. It is easy to check that $\Sigma M$ and $\Sigma N$ be objects in $\mathcal{C}$ with $\xi$-${\rm pd} \Sigma M \leq n$ and  $\xi{\rm xt}_{\xi}^{1}(\Sigma M,\Sigma N)=0$. By the forgoing proof, one can get that $\mathcal{C}(\Sigma g,\Sigma N):\mathcal{C}(\Sigma M,\Sigma N)\rightarrow \mathcal{C}(\Sigma P,\Sigma N)$ is a monomorphism. Thus $\mathcal{C}(g, N):\mathcal{C}(M, N)\rightarrow \mathcal{C}(P, N)$ is a monomorphism, and so $\delta_1:\mathcal{C}(M,N)\rightarrow \xi{\rm xt}_{\xi}^{0}(M,N)$ is an isomorphism, as desired.
\end{proof}

Let $R$ be a ring and $R$-Mod the category of left $R$-modules. Then it is clear that the class of projective left $R$-modules is generating subcategory of $R$-Mod and the class of injective $R$-modules is a cogenerating subcategory of $R$-Mod, it follows from Example \ref{Ex:3.12}(1) and Theorem \ref{thm:3.4}, we have the following corollary which contains the result of \cite[Theorem 1.1]{BM}.
\begin{cor} Let $R$ be a ring and $R$-Mod the category of left $R$-modules. Then the following are equivalent for any non-negative integer $m$:
\begin{enumerate}
\item[{\rm (1)}] $\sup\{\mathcal{G}{\rm pd}M \ | \ \textrm{for} \ \textrm{any} \ M\in R$-{\rm Mod}$\}\leq m$.

\item[{\rm (2)}]  $\sup\{\mathcal{G}{\rm id}M \ | \ \textrm{for} \ \textrm{any} \ M\in R$-{\rm Mod}$\}\leq m$.
\item[{\rm (3)}]  ${\rm spli}R={\rm silp}R\leq m$.
\end{enumerate}
Moreover, we have the following equality:
\begin{center}$\sup\{\mathcal{G}{\rm pd}M \ | \ \textrm{for} \ \textrm{any} \ M\in R$-{\rm Mod}$\}=\sup\{\mathcal{G}{\rm id}M \ | \ \textrm{for} \ \textrm{any} \ M\in R$-{\rm Mod}$\}$.\end{center}
\end{cor}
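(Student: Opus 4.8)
The plan is to realize the category of left $R$-modules as an extriangulated category and then invoke Theorem \ref{thm:3.4} together with Corollary \ref{corollary:4.5}. First I would equip $\mathcal{C}=R\text{-}\mathrm{Mod}$ with its canonical abelian (hence exact, hence extriangulated) structure and take $\xi$ to be the class of all short exact sequences of $R$-modules; this class is closed under isomorphisms and finite coproducts, contains the split sequences, is closed under base and cobase change, and is saturated, so it is a proper class of $\mathbb{E}$-triangles. With this choice the $\xi$-projective objects are exactly the projective modules and the $\xi$-injective objects are exactly the injective modules; consequently $\xi$-${\rm pd}$, $\xi$-${\rm id}$, $\xi$-$\mathcal{G}{\rm pd}$ and $\xi$-$\mathcal{G}{\rm id}$ reduce to the classical dimensions ${\rm pd}$, ${\rm id}$, $\mathcal{G}{\rm pd}$ and $\mathcal{G}{\rm id}$, while $\xi$-${\rm spli}\,\mathcal{C}$ and $\xi$-${\rm silp}\,\mathcal{C}$ become ${\rm spli}\,R$ and ${\rm silp}\,R$.

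Next I would check the standing hypotheses of Theorem \ref{thm:3.4}. Since $\mathcal{C}$ is abelian it is idempotent complete, hence weakly idempotent complete, and it has enough projectives and enough injectives, giving enough $\xi$-projectives and enough $\xi$-injectives. For the generating property, if $\mathcal{C}(\mathcal{P}(\xi),A)=0$ then in particular $\Hom(R,A)\cong A=0$, as $R$ is projective; dually, every nonzero module admits a nonzero morphism into its injective envelope, so $\mathcal{I}(\xi)$ is cogenerating. Finally, Condition $(\star)$ is satisfied automatically by Example \ref{Ex:3.12}(1), because $\mathcal{C}$ is an exact category and $\xi$ is a class of exact sequences closed under isomorphisms.

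With every hypothesis verified, Theorem \ref{thm:3.4} yields the equivalence of its conditions (1), (2) and (3) for the given non-negative integer $m$: the forward implications ${\rm (1)}\Rightarrow{\rm (3)}$ and ${\rm (2)}\Rightarrow{\rm (3)}$ always hold, and the converses follow because Condition $(\star)$ is in force. Translating back through the dictionary of the first step turns these into precisely the equivalence of (1), (2) and (3) in the present statement, and the displayed equality is exactly the content of Corollary \ref{corollary:4.5} under the same hypotheses.

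I do not expect a genuine obstacle, since the argument is a direct specialization of the general theorem. The only point deserving care is the bookkeeping in the first step: one must confirm that, for $\xi$ equal to the class of all short exact sequences, the relative invariants genuinely collapse to the ordinary ones, and in particular that ${\rm silp}\,R$ and ${\rm spli}\,R$ match $\xi\text{-}{\rm silp}\,\mathcal{C}=\sup\{\xi\text{-}{\rm id}\,P\mid P\in\mathcal{P}(\xi)\}$ and its dual. Once this identification is in place, the equivalence --- and with it the recovery of \cite[Theorem 1.1]{BM} --- is immediate.
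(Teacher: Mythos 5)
Your proposal is correct and follows exactly the paper's route: specialize to $\mathcal{C}=R\text{-}\mathrm{Mod}$ with $\xi$ the class of all short exact sequences, note that projectives generate and injectives cogenerate, invoke Example \ref{Ex:3.12}(1) for Condition $(\star)$, and apply Theorem \ref{thm:3.4} together with Corollary \ref{corollary:4.5}. The extra bookkeeping you supply (properness of $\xi$, weak idempotent completeness, collapse of the relative invariants to ${\rm pd}$, ${\rm id}$, ${\rm spli}\,R$, ${\rm silp}\,R$) is exactly what the paper leaves implicit, so there is no divergence in method.
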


As a consequence of Example \ref{Ex:3.12}(2) and Theorem \ref{thm:3.4}, we have the following corollary which refines a result of \cite{RL1}.
\begin{cor} \label{cor:3.7}Let $\mathcal{C}$ be a triangulated category, and let $\mathcal{P}(\xi)$ be a generating subcategory of $\mathcal{C}$ and $\mathcal{I}(\xi)$ is a cogenerating subcategory of $\mathcal{C}$. Then the following are equivalent for any non-negative integer $m$:
\begin{enumerate}
\item[{\rm (1)}]  $\sup\{\xi\textrm{-}\mathcal{G}{\rm pd}M \ | \ \textrm{for} \ \textrm{any} \ M\in{\mathcal{C}}\}\leq m$.

\item[{\rm (2)}]  $\sup\{\xi\textrm{-}\mathcal{G}{\rm id}M \ | \ \textrm{for} \ \textrm{any} \ M\in{\mathcal{C}}\}\leq m$.
\item[{\rm (3)}]  $\xi\textrm{-}{\rm spli}\mathcal{C}=\xi\textrm{-}{\rm silp}\mathcal{C}\leq m$.
\end{enumerate}
Moreover, we have the following equality:
$$\sup\{\xi\textrm{-}\mathcal{G}{\rm pd}M \ | \ \textrm{for} \ \textrm{any} \ M\in{\mathcal{C}}\}=\sup\{\xi\textrm{-}\mathcal{G}{\rm id}M \ | \ \textrm{for} \ \textrm{any} \ M\in{\mathcal{C}}\}.$$
\end{cor}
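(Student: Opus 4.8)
The plan is to deduce this corollary from the general equivalence already established in Theorem~\ref{thm:3.4}, after checking that a triangulated category fits its hypotheses. First I would recall (from the Remark in Section~2) that every triangulated category $\mathcal{C}$ is an extriangulated category, with the $\mathbb{E}$-triangles being exactly the distinguished triangles; the running assumptions of the previous section (enough $\xi$-projectives and $\xi$-injectives, weak idempotent completeness, and $\xi$ a proper class) are all in force here, and the hypotheses that $\mathcal{P}(\xi)$ is generating and $\mathcal{I}(\xi)$ is cogenerating are given. Thus the only thing separating us from applying Theorem~\ref{thm:3.4} is the verification of Condition~$(\star)$.

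This is precisely the content of Example~\ref{Ex:3.12}(2): because the proper class $\xi$ of triangles is closed under isomorphisms and suspension, the natural map $\mathcal{C}(M,N)\to\xi{\rm xt}_{\xi}^{0}(M,N)$ is an isomorphism whenever $M$ has finite $\xi\textrm{-}{\rm pd}$ and $\xi{\rm xt}_{\xi}^{i}(M,N)=0$ for all $i\geq1$ (and dually on the injective side). I would simply invoke that example rather than reprove it, since its induction on $\xi\textrm{-}{\rm pd}M$ --- built on a triangle $K\to P\to M\to\Sigma K$ with $P\in\mathcal{P}(\xi)$, $K\in\mathcal{P}^{\leqslant n-1}(\xi)$, and the transport of mono/epi information through $\Sigma$ --- is exactly the step where the triangulated structure enters.

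With Condition~$(\star)$ secured, Theorem~\ref{thm:3.4} applies directly and yields $(1)\Leftrightarrow(3)$ and $(2)\Leftrightarrow(3)$, so that (1)--(3) are equivalent for every non-negative integer $m$. The displayed equality of the two global suprema then follows from Corollary~\ref{corollary:4.5} (alternatively, from the fact that $(1)\Leftrightarrow(2)$ holds for all $m$, which pins the two suprema, viewed in $\mathbb{N}_0\cup\{\infty\}$, to the same value). At the level of the corollary there is no genuine difficulty: the substantive work lives in Theorem~\ref{thm:3.4} and Example~\ref{Ex:3.12}(2). The only point meriting a moment's care is confirming that a proper class of triangles in a triangulated category really is closed under suspension, so that Example~\ref{Ex:3.12}(2) is available --- a structural feature of proper classes in this setting (cf.\ \cite{Bel1} and \cite[Remark 3.4(3)]{HZZ}), not an extra assumption.
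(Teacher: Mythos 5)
Your proposal is correct and matches the paper's own route exactly: the paper derives this corollary in one step as a consequence of Example~\ref{Ex:3.12}(2) (which verifies Condition~$(\star)$ for a proper class of triangles closed under isomorphisms and suspension) together with Theorem~\ref{thm:3.4}, with the final equality of suprema following just as you describe. Nothing is missing from your argument.
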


\bigskip

\renewcommand\refname{\bf References}

\end{document}